\documentclass[11pt]{article}

\usepackage[margin=1in]{geometry}

\usepackage{float}
\usepackage{graphicx}
\usepackage{amsfonts,amsmath,amssymb,amsthm}
\usepackage{caption}
\usepackage{algorithm}
\usepackage{algpseudocode}
\usepackage{graphicx}

\newtheorem{theorem}{Theorem}[section]
\newtheorem{proposition}[theorem]{Proposition}
\newtheorem{lemma}[theorem]{Lemma}
\newtheorem{corollary}[theorem]{Corollary}
\newtheorem{definition}[theorem]{Definition}
\newtheorem{observation}[theorem]{Observation}

\usepackage{tikz}
\usetikzlibrary{arrows,positioning,trees,matrix}\pdfpageattr {/Group << /S /Transparency /I true /CS /DeviceRGB>>}
\tikzset{vertex/.style={circle,draw=black,fill=black!20,thin,node distance=0.75}}
\tikzset{edge/.style={thick}}

\begin{document}

\title{Bounds and algorithms for graph trusses}
\author{
{\sc Paul Burkhardt}$^{a}$
\and
{\sc Vance Faber}$^{b}$
\and
{\sc David G. Harris}$^{c}$
}

\renewcommand{\thefootnote}{a} 
\footnotetext{U.S.~National Security Agency, Ft.~Meade, MD 20755. 
Email: \texttt{pburkha@nsa.gov}}

\renewcommand{\thefootnote}{b} 
\footnotetext{IDA/Center for Computing Sciences, Bowie MD 20707. Email: \texttt{vance.faber@gmail.com}}

\renewcommand{\thefootnote}{c} 
\footnotetext{Department of Computer Science, University of Maryland, 
College Park, MD 20742. 
Email: \texttt{davidgharris29@gmail.com}}

\date{\today}

\maketitle 

\begin{abstract}
The $k$-truss, introduced by Cohen (2005), is a graph where every edge is incident to at least $k$
triangles. This is a relaxation of the clique. It has proved to be a useful tool in identifying cohesive
subnetworks in a variety of real-world graphs. Despite its simplicity and its utility, the combinatorial and algorithmic aspects of trusses have not been thoroughly explored. 

We provide nearly-tight bounds on the edge counts of $k$-trusses. We also give two improved algorithms for finding trusses in large-scale
graphs. First, we present a simplified and faster algorithm, based on approach
discussed in Wang \& Cheng (2012). Second, we present a theoretical algorithm based on
fast matrix multiplication; this converts a triangle-generation algorithm of Bj{\"o}rklund et al. (2014) into a dynamic
data structure.

KEYWORDS: Graph algorithms, truss, dense cores, cohesive subnetwork
\end{abstract}

\section{Introduction}
In a number of contexts, a group of interacting agents can be represented in terms of an undirected graph $G = (V,E)$.  For example, in a social network, the vertices may represent people with an edge if they know each other. One basic task is to find 
a \emph{cohesive subnetwork} of $G$: a maximal subgraph whose vertices are ``highly connected'' \cite{bib:seidman1983}. This may represent a discrete community in the overall network, or another type of subgroup with a high degree of mutual relationship. We emphasize that since we are ultimately trying to understand a non-mathematical property of $G$, we cannot give an exact definition of a cohesive subnetwork.

A number of graph-theoretic structures can be used to find cohesive subnetworks in $G$. A clique is the most highly connected substructure. An alternate choice, suggested by \cite{bib:seidman1983}, is the \emph{$k$-core}, which is defined as a maximal connected subgraph in which each vertex has degree at least $k$. 

Cohen \cite{tech-report-cohen, bib:cohen2009} proposed a stronger heuristic based on triangle counts called the \emph{truss}. Formally, a \emph{$k$-truss} is defined to be a graph in which every edge is incident to at least $k$ triangles and which has no isolated vertices. Note that a $(k+2)$-clique is a $k$-truss.\renewcommand{\thefootnote}{1}\footnote{Cohen defined the $k$-truss as being a connected graph
  such that every edge is incident to at least $k-2$
  triangles. This was presumably chosen so that
  a $k$-clique is a $k$-truss.}  A \emph{$k$-truss-component of $G$} is defined to be a maximal edge set $L \subseteq E$ such that the edge-induced subgraph $G(L)$ is a connected $k$-truss.

The $k$-truss has been rediscovered and renamed several times. The earliest example was its definition as a \emph{$k$-dense}
core~\cite{bib:saito2006}, which was motivated by the goal of detecting dense
communities where the $k$-core proved to be too coarse. It was also defined as a
\emph{triangle $k$-core} in~\cite{bib:zhang2012} and used as a motif exemplar in
graphs. Other names include \emph{$k$-community}~\cite{bib:verma2013} and
\emph{$k$-brace}~\cite{bib:ugander2012}.

The task of determining the truss-components of $G$ is called \emph{truss decomposition.} The truss-components of $G$ can be derived from an associated
hypergraph $H$ which is defined as follows: the vertex set of $H$ is the edge set of $G$, and the edge set of $H$ is the set of triangles of $G$.  Each $k$-core of $H$ corresponds to a $k$-truss-component of $G$.

The \emph{trussness} of an edge $e$ of $G$, denoted $\tau(e)$, is defined to be the maximal value $k$ such that $e$ is in a $k$-truss-component of $G$. Equivalently, $\tau(e)$ is the coreness of the edge $e$ regarded as a node of $H$. Truss decomposition algorithms typically first compute $\tau(e)$ for all edges $e$. The $k$-truss-components (for any value of $k$) can then be found by depth-first search of $G$ restricted to edges $e$ with $\tau(e) \geq k$.

An appealing feature of trusses is that truss decomposition algorithms are relatively practical, making them feasible for large graphs.  As a starting point, Cohen's original algorithm \cite{bib:cohen2009} was essentially an adaptation of a graph core decomposition algorithm of Matula \& Beck \cite{matula} to the hypergraph $H$. This was improved by Wang \& Cheng \cite{bib:wangcheng2012} and Huang et al. \cite{huang2015approximate} by avoiding explicit generation of $H$. These algorithms compute  truss decomposition in $O(m \alpha(G))$ time and $O(m)$ memory, where $\alpha(G)$ denotes the arboricity of $G$. Note that $m \alpha(G)$ is at most $O(m^{3/2})$.

The $k$-truss-component structure of graphs has become a key tool for pattern mining and community detection  in a variety of scientific and social
network studies, e.g. \cite{bib:ugander2012, bib:cheng2014}. Additionally, the truss serves as a fast filter for finding cliques since a $(k+2)$-clique is a $k$-truss. In these applications, the parameter $k$ represents the anticipated size of a meaningful cohesive subnetwork. Typically, $k$ may be much smaller than $n$. In this setting, it is often useful to compute a \emph{truncated} truss decomposition up to some chosen parameter $k_{\text{trunc}}$. This entails computing $\tau(e)$ for edges $e$ with $\tau(e) \leq k_{\text{trunc}}$; other edges $e$  record that $\tau(e) > k_{\text{trunc}}$ but do not record the precise value. For instance, Verma et al. \cite{verma2015solving} discussed an algorithm for sparse graphs with runtime $O(m k_{\text{trunc}} \Delta)$, where $\Delta$ is the maximum degree of $G$. 

\subsection{Our contributions and overview}
Despite its simplicity and its use for understanding real-world networks, the truss has seen little formal analysis, either from a combinatorial or algorithmic point of view. We address these gaps in this paper.

Our main combinatorial subject of investigation is the minimum number of edges in a $k$-truss. Section~\ref{sec:min_edgecount} gives asymptotically tight bounds for this quantity. Section~\ref{sec:critical_connectivity} analyzes a stricter notion of \emph{critical $k$-truss}, which is a $k$-truss none of whose subgraphs are themselves $k$-trusses. We summarize the main results of these sections as follows:
\begin{theorem}
  The minimum number of edges in a connected $k$-truss on $n$ vertices, is $n (1 + k/2) + \Theta(k^2)$. The minimum number of edges in a critical $k$-truss on $n$ vertices is $n k/2 + \Theta(n + k^2)$.
\end{theorem}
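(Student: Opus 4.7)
I would prove the theorem by treating the critical $k$-truss and connected $k$-truss bounds separately. Both start from a common elementary fact: since each edge of a $k$-truss has $k$ common neighbors between its endpoints, the minimum degree is at least $k+1$, giving $m \geq n(k+1)/2 = nk/2 + n/2$.

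For the critical bound, this inequality already yields the claimed lower bound with $n/2 = \Theta(n)$ absorbed into the $\Theta(n+k^2)$ error. The matching upper bound is achieved by disjoint copies of $K_{k+2}$, each of which is critical (removing any edge or vertex from a $K_{k+2}$ drops a triangle count below $k$): when $(k+2) \mid n$ this gives $m = n(k+1)/2$ exactly. For other values of $n$, the residual $O(k)$ vertices are handled by replacing one copy with a flower of two $K_{k+2}$'s glued at a single common vertex (which I can verify is critical and has $(k+2)(k+1)$ edges on $2k+3$ vertices), so the residual contributes $O(k^2)$ extra edges.

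For the connected $k$-truss upper bound I would take the ``flower'' construction: $t$ copies of $K_{k+2}$ all glued at one common vertex. This is a connected $k$-truss on $n = 1 + t(k+1)$ vertices with $m = t\binom{k+2}{2} = (n-1)(k+2)/2 = n(1+k/2) - (k+2)/2$ edges. For other $n$, adjust the final petal or append a short tail of cliques, incurring at most $O(k^2)$ extra edges.

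The connected lower bound $m \geq n(1+k/2) - O(k^2)$ is the substantive technical step. The plan is to study minimum-degree vertices. Call $v$ \emph{simplicial} if $d(v) = k+1$. Then $N(v)$ must induce $K_{k+1}$, because each edge $\{v,u\}$ needs $k$ common neighbors, all of which must come from the $k$ remaining vertices of $N(v) \setminus \{u\}$. Hence $\{v\} \cup N(v)$ is a $K_{k+2}$, the unique \emph{cell} of $v$, and $v$ belongs to no other $K_{k+2}$. In a connected $k$-truss on $n > k+2$ vertices, each cell must contain at least one non-simplicial vertex (otherwise that cell would form a disconnected component). Let $c$ be the number of distinct cells. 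The key accounting is roughly: each cell contributes $\binom{k+2}{2}$ edges, and cells can only overlap at non-simplicial vertices. In the cleanest case where the cells share only single ``hub'' vertices in a tree pattern, this gives $n \leq c(k+1) + 1$ and $m \geq c\binom{k+2}{2}$, yielding $m \geq (n-1)(k+2)/2$. The main obstacle is extending this analysis to arbitrary overlap patterns — cells may share edges or larger subcliques, and non-simplicial vertices may live outside any cell. I would handle these cases by showing that richer overlap structure only increases the edge-to-vertex ratio, and that extra-cellular vertices still contribute at least $k+1$ to their degree, with any remaining slack absorbed into the $\Theta(k^2)$ error term. An alternative route is induction on $n$: when a cell has exactly one non-simplicial vertex, its $k+1$ simplicial members can be peeled off cleanly (the remaining graph is still a connected $k$-truss on $n-(k+1)$ vertices with $m-\binom{k+2}{2}$ edges), giving the bound inductively; the subtle case is when every cell has multiple non-simplicial vertices, which must be treated carefully to avoid breaking the $k$-truss property upon peeling.
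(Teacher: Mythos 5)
Your construction for the critical upper bound does not work, and this is the most serious problem. A critical $k$-truss is by definition a $k$-truss $G$ such that $G(E')$ is \emph{not} a $k$-truss for any non-empty proper subset $E' \subsetneq E$. Disjoint copies of $K_{k+2}$ fail this immediately (the edge set of any single copy is a proper non-empty sub-$k$-truss; criticality also forces connectivity), and the same objection kills the two-petal flower: each petal $K_{k+2}$ is itself a $k$-truss, so the flower cannot be critical. Indeed, no construction obtained by gluing $(k+2)$-cliques can ever be critical, which is precisely why the paper's upper bound for $M^*_{n,k}$ requires a genuinely different construction: a graph embedded in the torus whose faces are short cycles, each face filled with a copy of $K_{k-1}$ completely joined to the face's cycle. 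Criticality there is established by a propagation argument showing that omitting any single edge cascades, face by face, to omitting every edge. The bound $nk/2 + O(n+k^2)$ you claim is true, but your argument does not establish the upper half of it, and the lower half ($m \geq n(k+1)/2$) is the easy direction.

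For the connected lower bound, your cell-based plan is a different route from the paper's, but it is incomplete exactly where you say it is, and that is where all the difficulty lives. Your degree-sum accounting loses too much when many vertices are simplicial (a chain of vertex-glued $(k+2)$-cliques has all but $O(n/k)$ vertices of degree $k+1$, and $\sum_v d(v) \geq |S|(k+1) + (n-|S|)(k+2)$ then falls well short of $(n-1)(k+2)$), so you genuinely need the cells to recover edges; but cells can share up to $k+1$ vertices and $\binom{k+1}{2}$ edges (e.g.\ $K_{k+3}$ minus an edge is a $k$-truss whose two cells overlap in a $K_{k+1}$), so the claim that ``richer overlap only increases the edge-to-vertex ratio'' is the entire theorem and cannot be asserted. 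The peeling induction has the same unresolved case. The paper sidesteps vertex degrees entirely: it fixes a rooted spanning tree $T$, classifies each triangle by how many of its edges lie in $T$, and constructs an explicit map $F: T \times [k] \to E \setminus T$ (choosing, for each tree edge, an off-tree edge from each of $k$ of its triangles according to a tie-breaking rule based on ``double-tree-compatibility''), then proves $F$ is at most $2$-to-$1$ by exhibiting a cycle in $T$ whenever an off-tree edge acquires three preimages. This gives $|E \setminus T| \geq (n-1)k/2$ and hence $m \geq (n-1)(1+k/2)$ uniformly, with no case analysis on the degree distribution. Your upper-bound construction for the connected case (flowers or chains of $(k+2)$-cliques, padded by one larger clique) is correct and matches the paper's.
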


Section~\ref{sec:algorithm1} describes a new simple algorithm for truss decomposition. We analyze this algorithm in terms of a graph parameter we refer to as the \emph{average degeneracy $\bar \delta(G)$}, defined as:
$$
\bar \delta(G) = \frac{1}{m} \sum_{\substack{\text{edges} \\ e = (u,v)}}  \min( \deg(u), \deg(v)).
$$

We have $\bar \delta(G) \leq 2 \alpha(G) \leq O(\sqrt{m})$. While $\alpha(G)$ is a local property of the graph, which can be influenced by a few  high-degree nodes, the parameter $\bar \delta(G)$ is a global property and can be much smaller than $\alpha(G)$. To the best of our knowledge, this parameter has not been studied before.  We show the following result:
\begin{theorem}
  There is an algorithm to compute the truss decomposition of $G$ in $O(m \bar \delta(G)) \leq O(m \alpha(G)) \leq O(m^{3/2})$ time and $O(m)$ memory.
\end{theorem}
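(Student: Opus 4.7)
The plan is to follow the classical Matula--Beck peeling scheme applied to the triangle hypergraph $H$, but to carry out all operations directly on $G$ without explicitly materializing $H$. The core technique used to obtain the $\bar\delta(G)$ bound is to always enumerate triangles through an edge $e=(u,v)$ by iterating over the neighborhood of whichever endpoint currently has smaller degree and testing adjacency to the other endpoint via a hash set, which is the standard ``min-degree iteration'' trick repurposed for truss decomposition.

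First I would compute, for every edge $e=(u,v)$, the initial triangle count $T(e)$ by running one min-degree neighborhood scan. With $O(1)$ adjacency queries this costs $O(\min(\deg(u),\deg(v)))$ per edge and thus $O(m\bar\delta(G))$ in total. I would then load the edges into a bucket priority queue keyed by $T(e)$, using doubly-linked bucket lists and back-pointers so that a unit decrement or a move between buckets is $O(1)$. The peeling loop repeatedly extracts an edge $e$ of minimum $T(e)$, sets $\tau(e):=T(e)$, and then rediscovers the common neighbors of $u$ and $v$ in the \emph{residual} graph via the same min-degree scan; for each such common neighbor $w$, the counts of $(u,w)$ and $(v,w)$ are decremented in the bucket queue, after which $e$ is removed from the adjacency structure. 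Correctness, i.e., that the recorded $T(e)$ equals $\tau(e)$, is exactly the Matula--Beck correspondence between the peeling order of $H$ and its core decomposition, already invoked by Cohen and by Wang and Cheng for this setting.

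For the time bound, the rediscovery step while peeling $e$ uses $O(\min(\deg_r(u),\deg_r(v)))$ work, where $\deg_r$ denotes residual degree; since residual degrees can only decrease, $\min(\deg_r(u),\deg_r(v))\le\min(\deg(u),\deg(v))$, and summing over the edges yields $O(m\bar\delta(G))$. The total number of decrement operations is at most the total number of initial edge--triangle incidences, which is likewise $O(m\bar\delta(G))$. Memory stays at $O(m)$: adjacency hash sets, the array $T(\cdot)$, and the bucket queue's lists and back-pointers, with no stored triangles. The main obstacle, and the reason this requires care, is that prior algorithms in the Wang--Cheng line typically precompute and store the full triangle list, a structure of size up to $\Theta(m\alpha(G))$, which would break the $O(m)$ memory bound; the crux is to regenerate triangles on demand at peel time and charge this regeneration against the residual min-degree sum, which is majorized by $m\bar\delta(G)$ through degree monotonicity. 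The chain $\bar\delta(G)\le 2\alpha(G)\le O(\sqrt m)$ then provides the stated weaker bounds.
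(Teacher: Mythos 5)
Your proposal is correct in its accounting but takes a genuinely different route from the paper. You retain the classical Batagelj--Zaver\v{s}nik-style bucket priority queue (the Wang--Cheng data structure) and obtain the $O(m \bar\delta(G))$ bound purely from min-degree triangle enumeration, charging both the initial counting and the at-peel rediscovery of triangles to $\sum_{(u,v)\in E}\min(d(u),d(v)) = m\bar\delta(G)$. The paper's Algorithm~\ref{alg:ktruss_1} deliberately discards the sorted bucket structure: it keeps only an unordered array $L$ of surviving edges and a stack $S$ of edges whose count has dropped below the current threshold $k$, and rescans $L$ at the start of every round. The extra ingredient the paper then needs --- and which your route does not --- is the bound on total rescan cost, $\sum_k |L_k| \le \sum_e (\tau(e)+1) \le m + m\bar\delta(G)$, which follows from $\tau(e)\le\min(d(u),d(v))$; the triangle-enumeration charging is the same in both. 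The paper's route buys implementation simplicity (plain arrays and a stack, no back-pointers or bucket boundaries); yours is a single pass with no repeated scanning. Both give $O(m\bar\delta(G))$ time and $O(m)$ memory. One side remark: the obstacle you identify (prior methods storing the full triangle list) is not the one the paper cites --- it credits Wang--Cheng with $O(m)$ memory already, and motivates the new algorithm by the heaviness of the sorted-bucket machinery itself.

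There is one concrete slip to repair: setting $\tau(e):=T(e)$ at extraction time is wrong. While peeling at level $j$, removing one minimum edge can decrement a neighboring minimum edge's count to $j-1$ before that neighbor is extracted. In $K_4$, after the first edge is peeled with count $2$, four of the remaining edges have residual count $1$, yet all six edges have $\tau = 2$. The correct assignment is $\tau(e):=\max(T(e),\ell)$, where $\ell$ is the running maximum of counts seen at extraction --- exactly the Matula--Beck convention you invoke, so the fix is one line, but as written the claim ``the recorded $T(e)$ equals $\tau(e)$'' is false. The paper's algorithm avoids this issue by outputting the round index $k-1$ rather than the residual count of the popped edge.
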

This algorithm is inspired by Wang \& Cheng \cite{bib:wangcheng2012} but uses much simpler and faster data structures; it should be practical for large-scale graphs.  

Section~\ref{sec:algorithm2} gives an alternative  algorithm for truncated truss decomposition based on 
matrix multiplication. We present here a slightly simplified summary in terms of the linear algebra constant $\omega$, i.e. the value for which multiplication of $N \times N$ matrices can be performed in $N^{\omega + o(1)}$ time. (The current best estimate \cite{bib:legall2014} is $\omega \approx 2.38$.) The algorithm is theoretically appealing, but the algorithm in
Section~\ref{sec:algorithm1} is more likely to be useful in practice.

\begin{theorem}
  There is an algorithm to compute the truncated truss decomposition up to any desired parameter $k_{\text{trunc}}$ in $m^{\frac{2 \omega}{\omega+1} + o(1)} k_{\text{trunc}}^{\frac{1}{\omega+1}}$ time and $m^{\frac{4}{\omega+1} + o(1)} + m^{1+o(1)} k_{\text{trunc}}$ memory.
\end{theorem}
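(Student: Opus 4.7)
The plan is to run the standard bucket-based peeling algorithm for truss decomposition, treating the triangle-listing procedure of Björklund et al.\ as a static subroutine that is re-invoked on the surviving subgraph a carefully chosen number of times. In the truncated regime, every edge that actually gets peeled has at most $k_{\text{trunc}}$ triangles at the moment of removal, which is what makes it possible to keep the number of full matrix-multiplication invocations down to $k_{\text{trunc}}^{1/(\omega+1)}$.

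First, I would set up the peeling data structure: a bucketed priority queue on the edges of $G$, indexed by current triangle count, so that extracting an edge of minimum count, assigning its trussness (or halting the truncated run if the minimum exceeds $k_{\text{trunc}}$), and decrementing counts when triangles are destroyed each take $O(1)$ time. Because each peeled edge is in at most $k_{\text{trunc}}$ current triangles, the total number of count decrements across the whole execution is $O(m k_{\text{trunc}})$, and only this many triangles ever need to be named; the memory term $m^{1+o(1)} k_{\text{trunc}}$ accounts for caching, per edge, a list of up to $k_{\text{trunc}}$ witness triangles.

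Second, I would compute initial per-edge triangle counts using the Björklund et al.\ framework. Vertices are split at a degree threshold $\Delta$ into a heavy set of size $O(m/\Delta)$ and a light set; triangles on three heavy vertices are identified via a dense product of the heavy-heavy adjacency matrix, while triangles touching a light vertex are enumerated by walking its neighborhood. Balancing these contributions with the standard choice of $\Delta$ gives time $m^{2\omega/(\omega+1)+o(1)}$ and working memory $m^{4/(\omega+1)+o(1)}$, the dominant contributions to the stated bounds.

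The hard part is keeping the matrix view consistent as edges are removed. I would organize the peeling into $R$ refresh rounds: at the start of each round I call the static Björklund et al.\ algorithm on the current surviving graph to recompute triangle counts and refill the witness lists; during the round I peel the next batch of $m/R$ edges using only the cached witnesses and the bucket structure, adjusting counts as triangles are destroyed. Setting $R = k_{\text{trunc}}^{1/(\omega+1)}$ balances the $R$ matrix-multiplication refreshes against the $O(m k_{\text{trunc}})$ local update work and yields the claimed time of $m^{2\omega/(\omega+1)+o(1)} k_{\text{trunc}}^{1/(\omega+1)}$. The most delicate step in the argument is verifying that, after $m/R$ deletions inside a batch, the cached witnesses remain sufficient to enumerate every current triangle through each newly peeled edge, so that no additional matrix multiplication is required within a batch.
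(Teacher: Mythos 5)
Your proposal has the right outer shell (peel edges with fewer than $k$ triangles, pay for triangle information with fast matrix multiplication, and let the $k_{\text{trunc}}$ factor absorb the extra cost), but the central step --- keeping the triangle information consistent under deletions --- has a genuine gap, and it is exactly the step the paper's construction is designed to solve differently. In your batch-refresh scheme you cache, per edge, ``up to $k_{\text{trunc}}$ witness triangles'' at the start of each round and then peel $m/R$ edges using only those caches. The problem is that an edge $e$ can have far more than $k_{\text{trunc}}$ triangles at refresh time (up to $\min(d(u),d(v))$, i.e.\ possibly $\Theta(\sqrt m)$ or more), and during the batch its count can drop below $k$ because neighboring edges are deleted. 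At that moment you must enumerate the triangles through $e$ that \emph{survive} in the residual graph, and there is no way to choose $k_{\text{trunc}}$ witnesses in advance that are guaranteed to include the survivors: an adversarial deletion order can destroy precisely the cached triangles and leave only uncached ones. Caching all triangles instead would cost $\Theta\bigl(\sum_e \triangle(e)\bigr)$ time and memory, which can reach $m^{3/2}$ and exceeds your stated bounds; note also that the $m^{2\omega/(\omega+1)}$ figure you quote per refresh is the cost of triangle \emph{counting}, not of listing witness triangles, so even the refresh step is undercosted.

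The paper avoids refreshes entirely: the matrix multiplications are performed once, at initialization, and the quantity maintained per edge is not a list of witnesses but $L = \tilde O(k_{\text{trunc}})$ randomized sums $S(e,\ell) = \sum_{w \in N(u) \cap N(v) \cap X_\ell} \mathrm{ID}(w)$ over random vertex subsets $X_\ell$ sampled at rate $1/k_{\text{trunc}}$. These sums are updatable in $\tilde O(1)$ time per destroyed triangle (just subtract an ID), and when an edge's residual triangle count has dropped to at most $k_{\text{trunc}}$, with high probability every remaining common neighbor is isolated in some $X_\ell$ and can be read off from $S(e,\ell)$ directly. The $k_{\text{trunc}}^{1/(\omega+1)}$ factor in the runtime then arises because the initialization must compute $S(\cdot,\ell)$ for all $L \approx k_{\text{trunc}}$ indices via rectangular matrix products, not from repeated full recomputations. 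To repair your argument you would need to replace the explicit witness caches with a deletion-robust sketch of this kind (or prove a covering property for your caches that, as stated, is false).
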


Fast matrix multiplication has been used for previous triangle counting and enumeration algorithms \cite{bib:ayz1997, bib:bjorklund2014}. For our decomposition algorithm, we turn these into dynamic data structures to maintain triangle lists as edges of $G$ are removed.   See Section~\ref{sec:algorithm2} for more precise costing.

\subsection{Truss combinatorics in the context of cohesive subnetworks}
The truss is an interesting but somewhat obscure combinatorial object, and this is the primary reason for studying its combinatorial properties. In addition, there is an important connection between the extremal bounds for trusses and its use as a heuristic for cohesive subnetworks.

Intuitively, a cohesive subnetwork on $n$ nodes should be highly connected. Thus the edge counts should be quite high, perhaps on the order of $n^2$. By contrast, our results in Section~\ref{sec:min_edgecount} show an example of a connected $k$-truss with edge counts as low as  $\Theta(k n)$.  It may be surprising that a connected $k$-truss can be extremely sparse despite its use in finding cohesive networks. These can be viewed as pathological cases where the truss heuristic does a poor job at discovering the underlying graph structure.

The extremal example consists of a series of $(k+2)$-cliques connected at vertices. This is clearly a collection of multiple distinct cohesive subnetworks. In particular, it contains many subgraphs which are themselves $k$-trusses.  This extremal example motivates us to define a stricter notion of \emph{critical $k$-truss} as a heuristic for finding cohesive subnetworks: namely, a collection of edges which is a $k$-truss but which contains no smaller $k$-truss.

It seems reasonable that this restriction might give a more robust heuristic for cohesive subnetworks. Yet, we will show in Section~\ref{sec:critical_connectivity} that this has similar extremal examples.  The additional restriction of criticality does not significantly increase the minimum edge count. These results suggest that, despite their usefulness for real-world graphs, both the $k$-truss and the critical $k$-truss can be fallible heuristics.

\subsection{Notation}
We let $n$ denote the number of vertices and $m$ the number of edges of a graph $G = (V,E)$. The neighborhood of a vertex $v \in V$
is the set $N(v) = \{u : (u,v) \in E\}$ and $d(v) = \lvert N(v) \rvert$ is the degree of $v$. We also define $N^+(v) = N(v) \cup \{v \}$. For simplicity, we assume throughout that $G$ has no isolated vertices and $m \geq n/2$.

We define a triangle to be a set of three vertices $(v_1, v_2, v_3)$ where edges $(v_1, v_2), (v_2, v_3), (v_3, v_1)$ are all present in $G$. We also write $(e_1, e_2, e_3)$ for this triangle, where edges $e_1, e_2, e_3$ are given by $e_1 = (v_1, v_2), e_2 = (v_2, v_3), e_3 = (v_3, v_1)$. We define $\triangle(e)$ and $\triangle(v)$ to be the number of triangles containing an edge $e$ or vertex $v$ respectively.  We say edge $e_1$ and $e_2$ are neighbors if they share a vertex.

For an integer $t$ we define $[t] =  \{1, \dots, t \}$. We assume there is some fixed, but arbitrary, indexing of the vertices, and we define $\text{ID}(v) \in [n]$ to be the identifier of vertex $v$.

For a vertex subset $U \subseteq V$, we define the \emph{vertex-induced subgraph} $G[U]$ to be the graph on vertex set $U$ and edge set $\{ (u,v) \in E : u,v  \in U \}$. For an edge set $L \subseteq E$, we define the \emph{edge-induced subgraph} $G(L)$ to be the graph on edge set $L$ and vertex set $\{ v \in V : \text{$(u,v) \in L$ for some $u \in V$} \}$.
 
The complete graph on $n$ vertices ($n$-clique) is denoted by
$K_n$.

We will analyze some algorithms in terms of the \emph{degeneracy} of graph $G$, which we denote $\delta(G)$. See Appendix~\ref{app:arb} for further definitions and properties.

\section{Minimum edge counts for the $k$-truss}
\label{sec:min_edgecount}

We begin by collecting a few simple observations on the vertex counts in a $k$-truss.

\begin{observation}
  \label{obs:mintriangle}
Any vertex $v$ in a $k$-truss $G$ must have degree at least $k+1$. Furthermore, the graph $G[N^+(v)]$ has at least $\binom{k+1}{2}$ triangles and $\binom{k+2}{2}$ edges.  
\end{observation}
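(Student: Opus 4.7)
The plan is to unpack what the truss condition forces locally at a vertex $v$, reasoning purely about the edges incident to $v$ and the triangles that use them.

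First I would show $d(v) \geq k+1$. Pick any edge $e = (v,u)$ incident to $v$; the truss condition says $e$ lies in at least $k$ triangles $(v,u,w)$, and each such third vertex $w$ is a common neighbor of $v$ and $u$. Thus $|N(v) \cap N(u)| \geq k$, and since $u \in N(v)$ is itself not in $N(u)$, this gives $d(v) \geq k+1$.

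Next I would count triangles of $G[N^+(v)]$ by restricting to triangles that actually contain $v$. These are exactly triples $(v,u,w)$ with $u,w \in N(v)$ and $(u,w) \in E$, and they all lie in $G[N^+(v)]$ automatically. Since every edge $(v,u)$ lies in at least $k$ triangles (each of this form, by the same argument as above), a double-count over the $d(v)$ edges incident to $v$ gives at least $\frac{k \cdot d(v)}{2} \geq \frac{k(k+1)}{2} = \binom{k+1}{2}$ triangles containing $v$. Each such triangle uses a distinct edge inside $N(v)$ (since fixing $v$, the triangle is determined by its edge in $N(v)$), so $G[N^+(v)]$ contains at least $\binom{k+1}{2}$ edges among vertices of $N(v)$. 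Adding the $d(v) \geq k+1$ edges from $v$ to $N(v)$ and using the Pascal identity $\binom{k+1}{1} + \binom{k+1}{2} = \binom{k+2}{2}$ yields the edge bound.

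I do not expect any real obstacle: the argument is a local counting exercise, and the only mild subtlety is making sure the double-count of edge-triangle incidences at $v$ is clean (each triangle through $v$ uses exactly two edges at $v$) and that the edges among $N(v)$ are not double-counted against the edges from $v$ to $N(v)$, which they are not since the two edge sets are disjoint.
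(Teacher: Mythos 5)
Your proof is correct and follows essentially the same route as the paper's: both establish $d(v)\geq k+1$ from the $k$ triangles on a single incident edge, both lower-bound the triangles through $v$ by double-counting edge--triangle incidences at $v$ (each triangle through $v$ meets exactly two edges at $v$), and both obtain the edge bound by noting that edges of $G[N^+(v)]$ split into the $d(v)$ edges at $v$ plus one edge inside $N(v)$ per triangle through $v$. No gaps.
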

\begin{proof}
Let $e$ be any edge on $v$. This edge $e$ has at least $k$ triangles, giving $k$ other edges incident on $v$. Each of these $k+1$ edges has at least $k$ triangles in $G[N^+(v)]$; furthermore, each such triangle is counted by at most two edges incident on $v$. So $\triangle(v)$ is at least $k (k+1) / 2$. Finally, the number of edges in $G[N^+(v)]$ is precisely $d(v) + \triangle(v)$, which is at least $(k+1) + \binom{k+1}{2} = \binom{k+2}{2}$.
  \end{proof}

\begin{observation}
\label{obs:minsize}
The minimum number of vertices in a $k$-truss is exactly $k+2$.
\end{observation}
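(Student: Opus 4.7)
The plan is to prove matching lower and upper bounds of $k+2$.

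For the lower bound, I would invoke Observation~\ref{obs:mintriangle} directly: any vertex $v$ in a $k$-truss must have degree at least $k+1$, so $v$ together with its $k+1$ distinct neighbors already gives $k+2$ vertices, forcing $|V| \geq k+2$.

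For the upper bound, I would exhibit $K_{k+2}$ as a $k$-truss on exactly $k+2$ vertices. Any edge $(u,v)$ in $K_{k+2}$ is contained in a triangle $(u,v,w)$ for each of the remaining $k$ vertices $w$, so every edge is incident to exactly $k$ triangles, and there are no isolated vertices. This matches the lower bound.

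There is no real obstacle here; the observation already does the combinatorial work, and the clique $K_{k+2}$ is the natural extremal example. The entire proof should be only a couple of sentences.
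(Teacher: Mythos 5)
Your proof is correct and follows exactly the same route as the paper: the lower bound comes from the minimum-degree-$(k+1)$ consequence of Observation~\ref{obs:mintriangle}, and the upper bound is witnessed by $K_{k+2}$. Nothing is missing.
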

\begin{proof}
Each vertex must have degree at least $k+1$, thus each $k$-truss contains at
least $k+2$ vertices. On the other hand, $K_{k+2}$ is a $k$-truss with
$k+2$ vertices.
\end{proof}

The properties can be used to bound the clustering coefficient, a common measure of graph density. Formally, the \emph{clustering coefficient} of a vertex $v$ is defined as $cc(v) = \triangle(v) / \tbinom{d(v)}{2}$. Thus,  $cc(v)$ is a real number in the range $[0,1]$.

\begin{observation}
  \label{cc-obs}
In a $k$-truss, each vertex $v$ has clustering coefficient $cc(v) \geq \binom{k+1}{2}/\binom{d(v)}{2}$. In particular,  if $d(v) = k+1$, then  $cc(v) = 1$ and $G[N^+(v)]$ is a $(k+2)$-clique.
\end{observation}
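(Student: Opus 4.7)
The plan is to derive this observation as an almost immediate consequence of Observation~\ref{obs:mintriangle}. Recall the definition $cc(v) = \triangle(v) / \binom{d(v)}{2}$. Observation~\ref{obs:mintriangle} gives the lower bound $\triangle(v) \geq \binom{k+1}{2}$, and dividing both sides by $\binom{d(v)}{2}$ yields the claimed inequality $cc(v) \geq \binom{k+1}{2}/\binom{d(v)}{2}$ with no further work.

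For the special case $d(v) = k+1$, I would substitute into the bound to get $cc(v) \geq \binom{k+1}{2}/\binom{k+1}{2} = 1$. Since $cc(v) \leq 1$ by definition, this forces $cc(v) = 1$ and hence $\triangle(v) = \binom{d(v)}{2} = \binom{k+1}{2}$. The key step is then to observe that $\triangle(v)$ equals the number of edges in the subgraph $G[N(v)]$ induced on the neighbors of $v$: each triangle containing $v$ corresponds bijectively to an edge between two of its neighbors. Thus $G[N(v)]$ has $\binom{k+1}{2}$ edges on $k+1$ vertices and is therefore the complete graph $K_{k+1}$. Adding back $v$, which by assumption is adjacent to all vertices of $N(v)$, gives that $G[N^+(v)]$ is a $(k+2)$-clique.

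There is no real obstacle here; the statement is essentially a repackaging of Observation~\ref{obs:mintriangle} in terms of clustering coefficients, with the equality case recovered by noting that the inequality $cc(v) \leq 1$ pins the value exactly. The only point that warrants care is the bijection between triangles through $v$ and edges in $G[N(v)]$, which justifies the jump from a triangle count to a complete subgraph on the neighborhood.
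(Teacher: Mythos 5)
Your proof is correct and matches the paper's (implicit) reasoning: the paper states this observation without proof, treating it as an immediate consequence of Observation~\ref{obs:mintriangle} and the definition $cc(v) = \triangle(v)/\binom{d(v)}{2}$, which is exactly the route you take. The equality-case argument via $cc(v) \leq 1$ and the bijection between triangles through $v$ and edges of $G[N(v)]$ is the right way to fill in the clique conclusion.
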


We also get a simple bound on the edge counts.
\begin{observation}
  \label{tt-bcor}
  If graph $G$ has $m$ edges, then $\tau(e) \leq \sqrt{2 m + 1/4} - 3/2 \leq \sqrt{2 m}$ for all edges $e$.
\end{observation}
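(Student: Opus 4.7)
The plan is to reduce this to Observation~\ref{obs:mintriangle} applied to the $k$-truss witnessing the trussness of $e$, then solve a quadratic inequality.

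First, I would set $k = \tau(e)$. By the definition of trussness, there exists an edge set $L \subseteq E$ such that the edge-induced subgraph $H = G(L)$ is a $k$-truss containing $e$. Let $v$ be one of the endpoints of $e$, so $v$ is a vertex of $H$.

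Next, I would apply Observation~\ref{obs:mintriangle} directly to $H$: the subgraph $H[N_H^+(v)]$ contains at least $\binom{k+2}{2}$ edges. Since every edge of $H$ is an edge of $G$, we obtain
\[
m \;\geq\; \binom{k+2}{2} \;=\; \frac{(k+1)(k+2)}{2}.
\]
Rearranging as $k^2 + 3k + (2 - 2m) \leq 0$ and applying the quadratic formula gives
\[
k \;\leq\; \frac{-3 + \sqrt{9 - 4(2 - 2m)}}{2} \;=\; \frac{-3 + \sqrt{8m + 1}}{2} \;=\; \sqrt{2m + \tfrac{1}{4}} \;-\; \tfrac{3}{2},
\]
which is the claimed bound.

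Finally, the weaker form $\sqrt{2m + 1/4} - 3/2 \leq \sqrt{2m}$ follows from a trivial algebraic check: squaring the equivalent inequality $\sqrt{2m + 1/4} \leq \sqrt{2m} + 3/2$ reduces to $1/4 \leq 3\sqrt{2m} + 9/4$, which holds for all $m \geq 0$. There is no real obstacle here; the argument is essentially just ``unpack the definition of $\tau(e)$, quote Observation~\ref{obs:mintriangle}, and invert a quadratic.''
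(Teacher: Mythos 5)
Your proof is correct and follows essentially the same route as the paper: apply Observation~\ref{obs:mintriangle} to the $k$-truss component containing $e$ to get $\binom{k+2}{2} \leq m$, then solve the quadratic (the paper leaves this last algebraic step implicit). No gaps.
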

\begin{proof}
  Suppose that edge $e = (u,v)$ has $\tau(e) = k$. Let $G'$ denote the $k$-truss component containing edge $e$. By Observation~\ref{obs:mintriangle}, $G'[N^+(v)]$ contains at least $\binom{k+2}{2}$ edges, and hence $\binom{k+2}{2} \leq m$.
\end{proof}

Let us define $M_{n,k}$ to be the minimum number of edges for a \emph{connected} $k$-truss on $n$ vertices. Our main result in this section is to estimate this quantity $M_{n,k}$, showing the following tight bounds:
\begin{theorem}
\label{thm:edge_bounds}
For every $k \geq 1$ and $n \geq k+2$, we have
$$
(n-1) (1 + k/2) \leq M_{n,k} \leq n (1 + k/2) + \Theta(k^2).
$$

Furthermore, if $n \equiv 1 \mod k+1$, then $M_{n,k} = (n-1) (1 + k/2)$.
\end{theorem}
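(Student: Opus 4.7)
My plan is to handle the two inequalities separately.

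For the upper bound, I would construct a chain of $t = \lfloor (n-1)/(k+1) \rfloor$ copies of $K_{k+2}$ joined consecutively at single shared vertices. Setting $n_0 = t(k+1) + 1$, this chain has exactly $n_0$ vertices and $t \binom{k+2}{2} = (n_0 - 1)(k+2)/2$ edges; each edge lies in a unique clique which supplies its $k$ triangles, so the chain is a connected $k$-truss. When $n \equiv 1 \pmod{k+1}$ we have $n = n_0$, and the construction matches the lower bound exactly. For general $n$, I would append the at most $k$ remaining vertices as ``cone'' vertices, each joined to a fixed $(k+1)$-subset of the terminal clique; this preserves the $k$-truss property and adds only $O(k^2)$ edges, giving the claimed upper bound $n(1+k/2) + \Theta(k^2)$.

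For the lower bound, I would use the block decomposition of $G$. Since $k \geq 1$, no edge is a bridge (every edge must lie in a triangle), so every block is 2-vertex-connected; since any triangle is contained in a single block, each block $B_i$ is itself a $k$-truss on $n_i \geq k+2$ vertices. Standard block-cut-tree accounting gives $\sum_i (n_i - 1) = n - 1$, so summing the block-level inequality $m_i \geq (n_i - 1)(k+2)/2$ recovers the overall bound. I would prove the block-level inequality by induction on $n_i$. The base case $n_i = k+2$ forces $B_i = K_{k+2}$ by Observation~\ref{obs:mintriangle}, attaining equality. For $n_i > k+2$, let $S$ denote the set of ``tight'' vertices of degree exactly $k+1$. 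If $S = \emptyset$, the degree bound $2m_i \geq (k+2) n_i$ strictly exceeds the target. Otherwise, Observation~\ref{cc-obs} implies that $K_v := N^+(v)$ is a $(k+2)$-clique for each $v \in S$; 2-connectivity together with $V(B_i) \supsetneq K_v$ forces at least two vertices of $K_v$ to have neighbors outside $K_v$ (else one of them would be a cut vertex). Hence every $v \in S$ has at least two neighbors in $T := V(B_i) \setminus S$, yielding $E(S,T) \geq 2|S|$, which combined with the degree bound $\sum_{w \in T} d(w) \geq (k+2)|T|$ completes the inductive step.

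The main technical obstacle is handling the regime $|S| > k+2$ in the 2-connected case, where neither the minimum-degree bound on $T$ nor $E(S,T) \geq 2|S|$ suffices on its own. I would close the gap by exploiting the additional structural fact that the $T$-neighbors of each $v \in S$ form a clique within $T$ (inherited from the clique structure of $K_v$), which yields a lower bound on $E(T)$ via a careful double-count over the collection of tight cliques $\{K_v : v \in S\}$.
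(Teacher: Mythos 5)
Your upper-bound construction and the exact case $n \equiv 1 \pmod{k+1}$ are correct and essentially match the paper (the paper enlarges the terminal clique to a $K_r$ with $k+2 \leq r < 2k+3$ rather than adding cone vertices, but both give $n(1+k/2)+O(k^2)$). Your reduction of the lower bound to 2-connected blocks is also valid and is a genuinely different opening move from the paper, which instead fixes a spanning tree $T$ and builds an explicit $2$-to-$1$ map $F: T \times [k] \to E - T$ to conclude $|E-T| \geq |T|k/2$ directly, with no case analysis on degrees. The problem is that your block-level inequality $m_i \geq (n_i-1)(k+2)/2$ is not actually proved, and the gap you flag yourself is the whole difficulty, not a technicality. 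Writing $S$ for the tight vertices and $T$ for the rest, the exact identity $2m = (k+1)|S| + \sum_{w \in T} d(w)$ shows that the block-level bound is \emph{equivalent} to $\sum_{w \in T}\bigl(d(w)-(k+2)\bigr) \geq |S|-(k+2)$: you must produce roughly one unit of excess degree in $T$ for every tight vertex. The bound $E(S,T) \geq 2|S|$ contributes nothing here --- it cancels identically out of the accounting above --- and the ``induction on $n_i$'' is never invoked in your case analysis, so what you have is a direct degree count that only closes when $|S| \leq k+2$.

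The proposed repair (lower-bounding $E(T)$ by double-counting over the tight cliques $\{K_v : v \in S\}$) does not obviously supply the missing excess. The clique structure of $N^+(v)$ only tells you that each vertex $w \in K_v \cap T$ has at least one neighbor outside $K_v$, which gives $d(w) \geq k+2$, i.e.\ excess exactly zero; to win you need vertices with two or more external neighbors, or vertices lying in several tight cliques, and nothing in your outline forces this. Moreover the tight cliques may cover only a small fraction of $T$, so any bound derived from them says nothing about the bulk of $\sum_{w\in T} d(w)$. Extracting the needed excess seems to require a further structural argument (e.g., analyzing the $k$ triangles on each edge leaving a tight clique, which forces external neighbors to attach to at least $k$ vertices of the clique), and it is not clear this terminates in anything simpler than the paper's spanning-tree argument. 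As written, the lower bound for $|S| > k+2$ is a genuine hole.
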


As an immediate corollary of Theorem~\ref{thm:edge_bounds}, we also get a tight bound on triangle counts:
\begin{corollary}
\label{thm:triangle_bounds}
A connected $k$-truss on $n$ vertices must contain at least $\frac{(n-1)(k+2) k}{6}$ triangles. 
\end{corollary}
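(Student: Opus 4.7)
The plan is to combine the edge-count lower bound from Theorem~\ref{thm:edge_bounds} with a standard double-counting argument relating edges and triangles in any $k$-truss. Specifically, since every edge of a $k$-truss is incident to at least $k$ triangles, summing $\triangle(e) \geq k$ over all $m$ edges and noting that each triangle contributes to exactly three edges gives
\[
\#\{\text{triangles}\} \;\geq\; \frac{km}{3}.
\]

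Next, I would invoke Theorem~\ref{thm:edge_bounds}, which guarantees $m \geq M_{n,k} \geq (n-1)(1 + k/2) = \frac{(n-1)(k+2)}{2}$ for any connected $k$-truss on $n$ vertices. Substituting this lower bound on $m$ into the inequality above yields
\[
\#\{\text{triangles}\} \;\geq\; \frac{k}{3} \cdot \frac{(n-1)(k+2)}{2} \;=\; \frac{(n-1)(k+2)k}{6},
\]
which is exactly the claimed bound. There is no real obstacle here — the only step of substance is the edge-count lower bound, which is exactly the content of Theorem~\ref{thm:edge_bounds} and has already been established. The corollary is immediate, as advertised.
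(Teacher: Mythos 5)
Your proof is correct and is essentially identical to the paper's: both use the double-counting bound of at least $km/3$ triangles together with the edge-count lower bound $m \geq (n-1)(1+k/2)$ from Theorem~\ref{thm:edge_bounds}. Nothing further is needed.
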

\begin{proof}
  The graph has at least $m \geq (n-1)(1 + k/2)$ edges. Each edge has at least $k$ triangles; since each triangle is incident to three edges, this implies there are at least $m k/3$ triangles. 
\end{proof}

For the upper bound of Theorem~\ref{thm:edge_bounds}, we use a construction based on vertex contraction.  Namely, for a pair of graphs $G, H$,  define $G * H$ to be the graph obtained by contracting an arbitrary vertex of $G$ to an arbitrary vertex of $H$. The resulting graph $G*H$ has $|V(G)| + |V(H)| - 1$ vertices and $|E(G)| + |E(H)|$ edges.

Now when $n \equiv 1 \mod k+1$, we get the upper bound $M_{n,k} \leq (n-1) (1 + k/2)$ by taking $G = A_1 * \dots * A_s$  where $A_1, \dots, A_s$ are copies of $K_{k+2}$ and $s = \frac{n-1}{k+1}$. This also shows that the bound of Corollary~\ref{thm:triangle_bounds} is tight in this case.

A slightly modified construction shows the upper bound $M_{n,k} \leq n(1+k/2) + O(k^2)$ for arbitrary values of $n$.  Namely, let $n = s (k+1) + r$ for integer $r$ in the range $k+2 \leq r < 2 k + 3$, and consider $G = A_1 * \dots * A_s * B$, where $A_1, \dots, A_s$ are copies of $K_{k+2}$ and $B$ is a copy of $K_r$. Noting that $r \leq 2k$, we calculate the edge count of $G$ as $m = s \binom{k+2}{2} + \binom{r}{2} \leq n ( 1 + k/2) + O(k^2)$.

We now turn to prove the lower bound of Theorem~\ref{thm:edge_bounds}. Let $G$ be a connected $k$-truss. Since $G$ is connected, it has a spanning tree $T$, which we may take to be a rooted tree (with an arbitrary root).  For a triangle $t$ of $G$, we say that $t$ is \emph{single-tree} if exactly one edge is in $T$, otherwise it is \emph{double-tree}. (If all three edges were in $T$, this would be a cycle on $T$.)  If an edge $e \in E - T$ participates in a double-tree triangle, we say that $e$ is \emph{double-tree-compatible} otherwise it is \emph{double-tree-incompatible}.

\begin{proposition}
\label{me1}
Let $(u,v,w)$ be a single-tree triangle where $( u, v ) \in T$. Then either $(u,w)$ or $(v,w)$ is double-tree-incompatible.
\end{proposition}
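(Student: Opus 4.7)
The plan is to argue by contradiction. Suppose both $(u,w)$ and $(v,w)$ are double-tree-compatible. Then there exist vertices $x$ and $y$ such that $(u,w,x)$ and $(v,w,y)$ are double-tree triangles. Since exactly one edge of each such triangle lies outside $T$, and the non-tree edges are $(u,w)$ and $(v,w)$ respectively, we must have $(u,x), (w,x) \in T$ and $(v,y), (w,y) \in T$.

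The next step is to verify that $x \notin \{u,v,w\}$ and $y \notin \{u,v,w\}$. The inequalities $x \neq u,w$ and $y \neq v,w$ are forced since $(u,w,x)$ and $(v,w,y)$ are genuine triangles on three distinct vertices. For $x \neq v$: if $x = v$ then the triangle $(u,w,x)$ would be the original triangle $(u,v,w)$ with both $(u,v)$ and $(w,v)$ in $T$, contradicting the hypothesis that the original triangle is single-tree. The argument for $y \neq u$ is symmetric.

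Now I would exhibit a cycle in $T$, contradicting that $T$ is a tree. Consider two cases. If $x = y$, then the edges $(u,v), (v,y), (y,u) = (u,v), (v,x), (x,u)$ are all in $T$, giving a triangle in $T$. If $x \neq y$, then the edges $(u,v), (v,y), (y,w), (w,x), (x,u)$ are all in $T$, giving a $5$-cycle in $T$ (distinctness of all five vertices follows from the previous paragraph together with $x \neq y$). Either situation contradicts acyclicity of $T$, so the initial assumption must fail, proving the proposition.

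There is no real obstacle here; the only thing to be careful about is the case analysis ensuring $x, y$ are not among $\{u,v,w\}$ and handling $x = y$ separately from $x \neq y$, since both produce cycles in $T$ but of different lengths.
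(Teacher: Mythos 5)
Your proof is correct and takes essentially the same route as the paper: assume both off-tree edges are double-tree-compatible, produce the witnesses $x,y$ with $(u,x),(x,w),(v,y),(y,w)\in T$, and derive a cycle in $T$ through $u,x,w,y,v$. In fact you are somewhat more careful than the paper's own proof, which only rules out $y=u$ and does not separately treat the degenerate cases $x=v$ and $x=y$ (the latter collapsing the $5$-cycle to a $3$-cycle); your version handles these explicitly.
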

\begin{proof}
  If $(u,w), (v,w)$ are both double-tree-compatible, this would imply that there are vertices $x, y$ with $( u, x ), ( x, w ), ( v, y ), ( w, y ) \in T$. Since $(u,w) \in E - T$, we must have $y \neq u$.  But then $u, x, w, y, v, u$ is a cycle on the tree $T$, which is a contradiction.
\end{proof}

Our proof strategy will be to construct a function $F: T \times [k] \rightarrow E - T$, and then argue that $F$ is $2$-to-$1$.  This will show that $E - T$ has cardinality at least $|T| k/2$, and so $|E| \geq |T| + |T| k/2 = (n-1) (1+k/2)$ which is the lower bound we need to show.

To define $F$, consider an edge $e = ( x, y ) \in T$, where $y$ is a $T$-child of $x$. Arbitrarily select $k$ triangles $t_{e,1}, \dots, t_{e,k}$ involving $e$.  For $i = 1, \dots, k$, we define $F(e,i)$ as follows:
\begin{itemize}
\item If $t_{e,i}$ is double-tree, then $F(e,i)$ is the unique off-tree edge of $t_{e,i}$.
\item If $t_{e,i}$ is single-tree and exactly one off-tree edge $f$ of $t_{e,i}$ is double-tree-incompatible, then $F(e,i) = f$.
\item If $t_{e,i}$ is single-tree and both off-tree edges of $t_{e,i}$ are double-tree-incompatible,  then $F(e,i)$ is the off-tree edge of $t_{e,i}$ containing $y$.
\end{itemize}

In light of Proposition~\ref{me1}, this fully defines the function $F$.  We refer to an edge $e$ as a preimage of $f$ if $F(e,i) = f$ for some index $i \in [k]$. Since any triangle is determined by two of its edges, such index $i$ is uniquely determined by $e$ and $f$.

\begin{proposition}
  \label{me3}
  Suppose that edge $f = (u,v) \in E - T$ is double-tree-incompatible, and $(u,x)$ is a preimage of $f$ where $x$ is a $T$-child of $u$.   Then edge $(x,v)$ must be double-tree-compatible. Furthermore, $f$ does not have a preimage $(u,y)$ where $y$ is a  $T$-child of $u$ distinct from $x$.
\end{proposition}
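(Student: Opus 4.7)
The plan is to first determine which clause in the definition of $F$ can produce $F((u,x), i) = f$. Since $(u,x) \in T$ and $f = (u,v)$, the triangle $t_{(u,x),i}$ must be $(u,x,v)$, and it already contains the tree edge $(u,x)$. If this triangle were double-tree, then $f$ would lie in a double-tree triangle, contradicting the hypothesis that $f$ is double-tree-incompatible; hence the triangle is single-tree with off-tree edges $f$ and $(x,v)$. The third clause of $F$ would place the image on the off-tree edge containing the $T$-child, which is $(x,v)$, not $f$. So the second clause must apply, meaning $f$ is the unique double-tree-incompatible off-tree edge and $(x,v)$ is double-tree-compatible. This gives the first claim.

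For the second claim, I would argue by contradiction. Suppose $(u,y)$ is also a preimage of $f$ with $y$ a $T$-child of $u$ and $y \neq x$. The argument above applied to $(u,y)$ shows that $(y,v)$ is likewise double-tree-compatible. Choose witnesses $w_x, w_y$ so that $(x, w_x), (w_x, v), (y, w_y), (w_y, v) \in T$. We must have $w_x \neq u$, else $(u,v) = (w_x, v) \in T$ contradicting $f \in E - T$; since the $T$-parent of $x$ is $u$, the edge $(x, w_x) \in T$ then forces $w_x$ to be a $T$-child of $x$. Symmetrically, $w_y$ is a $T$-child of $y$, and since $x \neq y$ these children have distinct $T$-parents and must themselves be distinct vertices.

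To close the argument, I observe that $w_x$ and $w_y$ are two distinct $T$-neighbors of $v$, while $v$ has at most one $T$-parent, so at least one of them—say $w_x$—must be a $T$-child of $v$. Then the unique $T$-parent of $w_x$ is $v$; but we just argued it is $x$, forcing $v = x$, contradicting $(u,x) \in T$ and $f = (u,v) \notin T$. The case in which $w_y$ is a $T$-child of $v$ is symmetric, yielding $v = y$ and the same type of contradiction with $(u,y) \in T$.

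The main obstacle I anticipate is simply keeping the parent/child orientation in $T$ straight throughout, since both the definition of $F$ (whose third clause singles out the off-tree edge containing the child) and the terminal contradiction (extracting $v = x$ or $v = y$ from uniqueness of $T$-parents) hinge on this bookkeeping. Once the orientation is tracked carefully, the proof is a direct case check with no further combinatorial difficulty.
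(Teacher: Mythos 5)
Your proof is correct and follows essentially the same route as the paper: part one is the identical case analysis of the three clauses defining $F$, and for part two you apply part one to both preimages, extract the same witness vertices $w_x, w_y$ (the paper's $r, s$), and derive the same contradiction with the tree structure. The only difference is presentational: the paper exhibits the explicit cycle $v, s, y, u, x, r, v$ in $T$, whereas you reach the contradiction via uniqueness of $T$-parents, which if anything makes the degenerate coincidences (e.g.\ $r = u$ or $r = s$) more transparent.
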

\begin{proof}
 For the first result, suppose that $F( (u,x), i) = f$ and consider the triangle $t_{(u,x),i}$. Since $f$ is double-tree-incompatible, necessarily $t_{(u,x),i}$ is single-tree. If the other edge $(x, v)$ in this triangle were also double-tree-incompatible, then since $x$ is a $T$-child of $u$ we would have $F( (u,x), i ) = (x, v) \neq f$, a contradiction.

 For the second result, suppose that $(u,x)$ and $(u,y)$ are preimages of $f$. By the argument in the preceding paragraph the edges $(x,v)$ and $(y,v)$ are both double-tree-compatible. So there are vertices $r,s$ with $( x, r ), ( r, v ), ( y, s ), ( s, v ) \in T$. One can then check that $v,s,y,u,x,r,v$ is a cycle on $T$, a contradiction.
\end{proof}

\begin{proposition}
Every edge $f = (u,v) \in E-T$ has at most two preimages under $F$.
\end{proposition}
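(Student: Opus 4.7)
The plan is to prove the bound by a case split on whether $f = (u,v)$ is double-tree-compatible (DTC) or double-tree-incompatible (DTI), arguing in each case that at most two tree edges can map to $f$ under $F$.

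For the DTC case, the tree path from $u$ to $v$ has length exactly two, passing through a unique vertex $w$ with $(u,w), (w,v) \in T$. I would argue that the only tree edges that can be preimages are $(u,w)$ and $(w,v)$. For any other tree edge of the form $(u,a)$ with $a \neq w$, if a triangle $(u,v,a)$ exists in $G$ then it must be single-tree (a second double-tree configuration would give a second length-two $T$-path from $u$ to $v$, contradicting uniqueness). Its two off-tree edges are $f$ and $(v,a)$; Proposition~\ref{me1} forces at least one to be DTI, and since $f$ is DTC by assumption, $(v,a)$ must be the DTI one. The defining rule for $F$ then picks the unique DTI edge, so $F((u,a),i) = (v,a) \neq f$. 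A symmetric argument on the $v$-side completes this case.

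For the DTI case, every triangle containing $f$ is single-tree. Proposition~\ref{me3} already gives at most one preimage of the form $(u,x)$ with $x$ a $T$-child of $u$, and by symmetric reasoning at most one of the form $(v,x')$ with $x'$ a $T$-child of $v$. The remaining task is to handle preimages of the form $(p,u)$ where $p$ is the $T$-parent of $u$ (and the analogous case at $v$). The approach is to extend the cycle-in-$T$ argument from the proof of Proposition~\ref{me3}: if a $T$-child preimage $(u,x)$ exists, then the DTC-ness of $(x,v)$ yields tree edges $(x,r),(r,v)$ and hence a three-step $T$-path $u,x,r,v$; any additional candidate preimage—whether a $T$-child preimage on the $v$-side or a $T$-parent preimage at $u$ or $v$—should force a second $u$-to-$v$ tree path or close an explicit cycle on $T$, contradicting that $T$ is a tree.

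The main obstacle is to organize this extension rigorously across all mixed $T$-parent/$T$-child configurations, because the orientation of each tree edge controls which off-tree edge the $F$-rule selects and hence which triangle-compatibility conditions must hold. The cleanest statement is probably that every preimage of $f$ must lie on the unique $T$-path between $u$ and $v$, so only the first and last edges of that path can serve as preimages. Once the DTI case is closed, combining with the DTC case gives the uniform bound of two preimages, yielding $|E - T| \geq |T|\,k/2$ and hence $|E| \geq (n-1)(1 + k/2)$, as required for the theorem's lower bound.
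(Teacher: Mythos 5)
Your Case~I (where $f$ is double-tree-compatible) is fine and is essentially the paper's argument: single-tree triangles containing $f$ always map to a double-tree-incompatible off-tree edge, so only the unique double-tree triangle containing $f$ can produce preimages, and it has exactly two tree edges. The problem is Case~II, which is where all the real work lies. There you give a plan rather than a proof --- you say yourself that the "main obstacle" of organizing the mixed parent/child configurations remains --- and the organizing principle you propose to close it, namely that every preimage of $f=(u,v)$ lies on the unique $T$-path from $u$ to $v$, is false. A preimage $(p,u)$ with $p$ the $T$-parent of $u$ arises from the triangle $(p,u,v)$ via the \emph{third} rule of $F$ whenever both off-tree edges $(u,v)$ and $(p,v)$ are double-tree-incompatible: the rule then selects the off-tree edge containing the child endpoint $u$, which is $f$. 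Nothing ties $p$ to the $u$--$v$ tree path. Concretely, let $T$ be the path $p,u,x,r,v$ and add the chords $(u,v),(x,v),(p,v)$: then $f=(u,v)$ is double-tree-incompatible, $(u,x)$ is a preimage via the triangle $(u,x,v)$, and $(p,u)$ is a preimage via the triangle $(p,u,v)$, even though $(p,u)$ does not lie on the path $u,x,r,v$. So the lemma you hope to prove cannot be proved, and Case~II remains open in your write-up.

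The paper instead runs a direct configuration analysis. Supposing $f$ has three preimages, Proposition~\ref{me3} rules out two of them being child-edges at the same endpoint, so up to symmetry they are $(u,x),(u,y),(v,z)$ with $x$ a $T$-child and $y$ the $T$-parent of $u$. Applying Proposition~\ref{me3} to the child-edge preimages $(u,x)$ and $(v,z)$ gives that $(x,v)$ and $(z,u)$ are double-tree-compatible, yielding tree edges $(x,r),(r,v)$ and $(z,s),(s,u)$; chaining these with $(u,x)$ and $(v,z)$ closes the explicit cycle $u,x,r,v,z,s,u$ in $T$, a contradiction. If you rebuild Case~II along these lines, be careful with exactly the configuration your sketch glosses over: the argument needs $z$ to be a $T$-child of $v$ (the paper asserts this via the derived orientation $u \to x \to r \to v$ of the tree path), and the sub-case where the third preimage at $v$ is the edge to $v$'s own $T$-parent $r$ --- which the third rule of $F$ does not automatically exclude --- must be addressed explicitly rather than assumed away.
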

\begin{proof}
  \textbf{Case I: $f$ is double-tree-compatible.}   The only possible preimages to $f$ would come from double-tree triangles in which $f$ is the unique off-tree edge. There can only be a single such triangle; for, if $(u,v,x)$ and $(u,v,y)$ were two such triangles, then $u,x,v,y,u$ would be a cycle on $T$. So the only possible preimages of $f$  are the two tree-edges in this double-tree triangle.

\noindent  \textbf{Case II: $f$ is double-tree-incompatible.} We first claim that $f$ cannot have three  preimages $(u,x ), (u, y ), ( u, z )$. For, in this case, at least two vertices, say without loss of generality $x,y$, must be $T$-children of $u$. This is ruled out by Proposition~\ref{me3}.

So suppose that $f$ has three preimages $( u, x ), (u, y), ( v, z )$. By Proposition~\ref{me3}, it cannot be that both $x$ and $y$ are $T$-children of $u$. So assume without loss of generality that $y$ is the $T$-parent of $u$ and $x$ is a $T$-child of $u$.

By Proposition~\ref{me3}, the edge $( x,v )$ is double-tree-compatible. So there is some vertex $r$ such that $( x, r ), ( r, v ) \in T$.  Since $u$ is the $T$-parent of $y$, this implies that $x$ is the $T$-parent of $r$ and $r$ is the $T$-parent of $v$ and $v$ is the $T$-parent of $z$.

Since $(v,z)$ is a preimage of $f$ and $v$ is the $T$-parent of $z$, by Proposition~\ref{me3} the edge $( z, u )$ must be double-tree-compatible. So we have $( z, s ), ( s, u ) \in T$ for some vertex $s$. It can be seen that  $u, x, r, v, z, s, u$ is a cycle on $T$, a contradiction.
\end{proof}

This completes the proof of the lower bound of Theorem~\ref{thm:edge_bounds}.

\section{Critical connectivity of the $k$-truss}
\label{sec:critical_connectivity}

We define a \emph{critical $k$-truss} as follows:

\begin{definition}[Critical $k$-truss]
Graph $G$ is a \emph{critical} $k$-truss if $G$ is a $k$-truss, but $G(E')$ is not a $k$-truss for any non-empty $E' \subsetneq E$.
\end{definition}

It is clear that critical $k$-trusses are connected. The extremal graphs $A_1 * \dots * A_s$ for the upper bound in Theorem~\ref{thm:edge_bounds} are far from critical, as each subgraph $A_i$ is a $k$-truss. Arguably, critical $k$-trusses are more relevant for community detection --- if a  graph contains smaller $k$-trusses, then it is a conglomeration of  communities rather than a single cohesive subnetwork of its own.

We begin with some simple observations.
\begin{observation}
  \label{oo1}
 There is no critical $k$-truss with exactly $k+3$ vertices. In a critical $k$-truss with more than $k+3$ vertices, every vertex has degree at least $k+2$.
\end{observation}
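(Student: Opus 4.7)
The plan is to combine Observation~\ref{cc-obs}, which tells us that a vertex $v$ of degree exactly $k+1$ in a $k$-truss forces $G[N^+(v)]$ to be a $(k+2)$-clique, with the fact that any $(k+2)$-clique is itself a $k$-truss. In both parts, this will supply a ``smaller'' $k$-truss inside $G$ whose edge set we can show is a non-empty proper subset of $E$, thereby contradicting criticality.

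For the second claim, I would assume for contradiction that $G$ is a critical $k$-truss with $n > k+3$ vertices yet some vertex $v$ has $d(v) = k+1$. Applying Observation~\ref{cc-obs}, the set $E' = E(G[N^+(v)])$ is the edge set of a $(k+2)$-clique on the $k+2$ vertices of $N^+(v)$, so $G(E')$ is a $k$-truss. Because $n > k+3$, there is at least one vertex $w \notin N^+(v)$; since $G$ has minimum degree at least $k+1$, the vertex $w$ is incident to at least one edge, and any such edge has $w$ as an endpoint and therefore does not lie in $E'$. Hence $E' \subsetneq E$ and $E' \neq \emptyset$, contradicting the criticality of $G$.

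For the first claim, I would suppose for contradiction that $G$ is a critical $k$-truss on exactly $k+3$ vertices. If some vertex $v$ has $d(v) = k+1$, then the very same argument works: the single vertex $w \notin N^+(v)$ is incident to an edge outside the clique $G[N^+(v)]$, so $E'$ is again a non-empty proper subset of $E$ forming a $k$-truss. Otherwise every vertex has degree at least $k+2$; on $k+3$ vertices this forces each degree to equal $k+2$, so $G = K_{k+3}$. In that case I would take $E'$ to be the edge set of the $(k+2)$-clique obtained by deleting any single vertex; this $E'$ is a non-empty proper subset of $E$ and $G(E')$ is a $k$-truss, once more contradicting criticality.

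There is no real obstacle; the only point requiring a touch of care is verifying that the candidate edge set $E'$ is \emph{properly} contained in $E$, which is exactly why the vertex-count hypotheses ($n > k+3$ in one case, $n = k+3$ with all degrees $k+2$ in the other) are invoked to locate an edge of $G$ lying outside $E'$.
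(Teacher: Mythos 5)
Your proposal is correct and follows essentially the same route as the paper: use Observation~\ref{cc-obs} to turn a degree-$(k+1)$ vertex into a $(k+2)$-clique that is a proper non-empty sub-$k$-truss, and in the remaining case of $n=k+3$ with all degrees $k+2$, identify $G$ as $K_{k+3}$ and exhibit a $K_{k+2}$ inside it. You simply spell out the proper-containment verification that the paper leaves implicit.
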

\begin{proof}
  By Observation~\ref{obs:mintriangle}, each vertex $v$ has $d(v) \geq k+1$. By Observation~\ref{cc-obs}, if $d(v) = k+1$, then the neighborhood of $v$ is a copy of $K_{k+2}$, which is impossible in a critical $k$-truss. Furthermore, if $n = k+3$ and $d(v) = k+2$ for all vertices $v$, then $G$ is a copy of $K_{k+3}$, and in particular $G$ contains a copy of the $k$-truss $K_{k+2}$.
\end{proof}

\begin{observation}
  \label{oo2}
The graph $K_3$ is the only critical $1$-truss.
\end{observation}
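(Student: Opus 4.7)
The plan is a short case analysis on the number of vertices $n$ of the critical $1$-truss. First I would verify the easy direction: $K_3$ is a $1$-truss (its unique triangle shows each of its three edges lies in a triangle), and any proper nonempty edge subset has at most two edges, hence contains no triangle and is not a $1$-truss. So $K_3$ is indeed a critical $1$-truss.

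For the converse, let $G$ be any critical $1$-truss, with $n = |V(G)|$ and $m = |E(G)|$. By Observation~\ref{obs:minsize} (with $k=1$), we have $n \geq 3$; by Observation~\ref{oo1} (with $k=1$), we have $n \neq 4$. Thus it suffices to handle $n = 3$ and $n \geq 5$ separately. The $n = 3$ case is immediate: the only graph on three vertices containing any triangle at all is $K_3$, and $G$ must contain a triangle because every edge lies in one. So $G = K_3$.

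The remaining task is to rule out $n \geq 5$ by producing a smaller $1$-truss inside $G$, contradicting criticality. Since critical $k$-trusses are connected (as noted in the paper), $G$ has at least $n-1 \geq 4$ edges, so $m > 3$. Now pick any edge $e$ of $G$; as $G$ is a $1$-truss, $e$ lies in some triangle $t$. Let $E'$ be the three edges of $t$. Then $G(E')$ is isomorphic to $K_3$, which is itself a $1$-truss, and $E' \subsetneq E$ because $|E'| = 3 < m$. This contradicts the criticality of $G$, completing the case analysis.

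There is no real obstacle here: once Observations~\ref{obs:minsize} and~\ref{oo1} cut the problem down to $n = 3$ or $n \geq 5$, each case is a one-line argument, and the only subtle point is to remember that every edge of a $1$-truss sits in a triangle, so a copy of $K_3$ is always available as a proper sub-$1$-truss whenever $m > 3$.
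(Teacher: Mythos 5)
Your proof is correct and its key step---any edge of a $1$-truss lies in a triangle whose three edges induce a copy of $K_3$, which is itself a $1$-truss and so by criticality must be all of $E$---is exactly the paper's argument. The case analysis on $n$ and the appeals to Observations~\ref{obs:minsize} and~\ref{oo1} are harmless but unnecessary scaffolding: the triangle argument alone already forces $E$ to equal the three edges of that triangle.
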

\begin{proof}
Suppose that $G$ is a critical $1$-truss, and let $e_1$ be an edge of $G$. So $e_1$ is contained in some triangle with edges $(e_1, e_2, e_3)$. Then $G( \{e_1, e_2, e_3 \})$ is a $1$-truss.
\end{proof}

Let us define $M^*_{n,k}$ to be the minimum number of edges in a critical $n$-node $k$-truss; we have $M^*_{n,k} = \infty$ if no such critical $k$-truss exists. To avoid edge cases covered by Observations~\ref{oo1} and \ref{oo2}, we assume that $k \geq 2$ and $n \geq k+4$.

For $k = 2$, we can compute the precise value of $M^*_{n,k}$:
\begin{theorem}
  \label{mstar2}
  For all $n \geq 6$ we have $M^*_{n,2} = 3 n - 6.$
  \end{theorem}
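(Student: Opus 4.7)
My plan is to prove the matching upper and lower bounds on $M^*_{n,2}$ separately.

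For the upper bound $M^*_{n,2} \leq 3n-6$, I would exhibit the bipyramid $B_n$ over the cycle $C_{n-2}$: two non-adjacent apex vertices $a, b$, each joined to every vertex of a base cycle $v_1 v_2 \cdots v_{n-2} v_1$. This graph has $(n-2) + 2(n-2) = 3n-6$ edges. Every cycle edge $(v_i, v_{i+1})$ lies in exactly the two triangles $(v_i, v_{i+1}, a)$ and $(v_i, v_{i+1}, b)$, and every apex edge $(a, v_i)$ lies in $(a, v_i, v_{i-1})$ and $(a, v_i, v_{i+1})$, so $B_n$ is a 2-truss with every edge in exactly 2 triangles. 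For criticality, suppose $E' \subseteq E(B_n)$ is nonempty with $G(E')$ a 2-truss. Each $e \in E'$ must lie in at least 2 triangles of $G(E')$; since $e$ is in only 2 triangles of $B_n$, both must have all three edges in $E'$. This closure-under-triangle-completion, started from any single edge, percolates through the entire bipyramid until $E' = E(B_n)$, so no proper edge subset forms a 2-truss.

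For the lower bound, Observation~\ref{oo1} only gives $d(v) \geq 4$ and hence $m \geq 2n$, so I need to exploit criticality further. My plan is a structural analysis in two preliminary steps: (i) show that every edge has exactly 2 triangles (otherwise some edge $e$ with $\triangle(e) \geq 3$ admits a ``peeling'' operation in which one deletes $e$ and iteratively trims edges whose triangle count drops below 2, and the excess triangle on $e$ provides enough slack for the peeling to halt on a nonempty sub-2-truss, contradicting criticality); (ii) show that each link $G[N(v)]$ is a single cycle (from (i) it is 2-regular and hence a disjoint union of cycles, and if it had at least two components, the set of edges triangle-reachable from a link-component-incident edge would form a proper sub-2-truss, again contradicting criticality).

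Combining (i) and (ii), the 2-dimensional simplicial complex formed by the vertices, edges, and triangles of $G$ is a connected closed 2-manifold. Its Euler characteristic satisfies $\chi = n - m + T \leq 2$. Since every edge lies in exactly 2 triangles, $3T = \sum_e \triangle(e) = 2m$, so $T = 2m/3$, and substituting gives $n - m/3 \leq 2$, i.e.\ $m \geq 3(n-2) = 3n-6$, with equality iff the surface is the sphere (i.e.\ $G$ is a planar triangulation). The main obstacle is step (i): without it the link cycle structure in (ii) and the Euler computation do not go through cleanly, and a careful peeling argument is needed to guarantee that the removal cascade halts on a nonempty subgraph rather than erasing everything.
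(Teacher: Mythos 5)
Your upper bound is exactly the paper's construction (the bipyramid over $C_{n-2}$) and is fine. The lower bound, however, has genuine gaps at both of its structural steps. Step (i) is not proved: the ``slack'' heuristic does not describe a mechanism. Once you delete $e$, all of its triangles are destroyed simultaneously, and every neighboring edge that was at exactly $2$ triangles drops to $1$ and is peeled regardless of whether $e$ had $3$ triangles or $2$; the excess sits on the deleted edge, not on the edges that drive the cascade, so nothing guarantees the peeling halts before erasing the whole graph. You acknowledge this is the main obstacle, and I do not see how to close it cheaply. Step (ii) is actually \emph{false} as stated: take a triangulated sphere in which every triangle is a face, pick two vertices at distance at least $4$, and identify them. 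No multi-edges or new triangles are created, every edge still lies in exactly $2$ triangles, and the face-adjacency (dual) graph is still connected, so the triangle-completion closure of any edge is everything and the graph is a critical $2$-truss --- yet the link of the merged vertex is two disjoint cycles. Consequently the complex need not be a closed $2$-manifold, and the appeal to the classification of surfaces to get $\chi \leq 2$ is unjustified. (This particular failure is repairable --- a connected pseudo-surface with connected dual graph normalizes to a single closed surface, and pinching only decreases $\chi$ --- but the argument as written proves a false intermediate claim. Equality also need not force a sphere.)

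For contrast, the paper's lower bound avoids all of this structure. It takes $U$ to be a minimum set of triangles covering every edge at least twice, shows via criticality that the characteristic vectors $\{\chi_t : t \in U\}$ in the $GF(2)$ cycle space admit at most one linear dependency (any vanishing proper subsum would yield a proper nonempty sub-$2$-truss), and then combines $|U| - 1 \leq m - n + 1$ with $3|U| \geq 2m$ to get $m \geq 3n - 6$. This uses criticality only through one soft closure argument and needs no claim about exact triangle counts or vertex links. Your topological route would, if completed, prove a stronger structural statement (that critical $2$-trusses are triangulated pseudo-surfaces), but as it stands both of its pillars are unsupported, and one of them is false.
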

\begin{proof}
For  the upper bound, consider the graph consisting of a cycle $C$ of length $n-2$, plus two new vertices $x_1, x_2$ with edges to every vertex in $C$. This is a critical $2$-truss with $n$ vertices and $3 n - 6$ edges. 

For the lower bound, let $G = (V,E)$ be a critical $2$-truss with $m$ edges and $n$ vertices. Define $\mathcal V$ to be the vector space of all functions from $E$ to the finite field $GF(2)$, and for any triangle $t$ of $G$ we define $\chi_t$ to be the characteristic function of $t$, i.e. $\chi_t(e) = 1$ if and only if $e \in t$.

Select $U$ to be any smallest set of triangles in $G$ with the property that every edge is in at least two triangles of $U$. This is well-defined since $G$ is a $2$-truss. We claim that for every proper subset $W \subsetneq U$, the sum $\sum_{t \in W} \chi_t$ is not identically zero.

For, suppose it is, and let $L$ denote the set of edges appearing in the triangles $t \in W$. For any edge $e \in L$, we then have $\sum_{t \in W} \chi_t(e) = 0$ in the field $GF(2)$. Since the sum is taken modulo two, there are at least two triangles $t_1, t_2$ in $G(L)$ containing $e$ (by definition of $L$, there is at least one). Thus $G(L)$ is a $2$-truss. Since $G$ is a critical 2-truss, we must have $L = E$. Thus, every edge of $G$ is covered by at least two triangles in $W$. This contradicts minimality of $U$.

We have shown that there is at most one linear dependency among the functions $\chi_t$ for $t \in U$ (namely, corresponding to $W = U$).  A standard result (see e.g. \cite{diestel2012graph}) is that the vector space $\mathcal V$ has dimension $m - n + 1$. This implies that $|U| - 1 \leq m - n + 1$. On the other hand, every edge is in at least two triangles in $U$ and so $3 |U| \geq 2 m$. Putting these inequalities together gives $m \geq 3 n-6$.
\end{proof}

Our main result in this section is to estimate $M^*_{n.k}$, showing a result similar to Theorem~\ref{thm:edge_bounds}. Specifically, we will show that
$$
M^*_{n,k} = ( \Theta(1) + k/2 ) n + \Theta(k^2).
$$
(A more precise estimate is shown in Theorem~\ref{mnk-thm}.)

\begin{lemma}
\label{sslem}
For any integers $k \geq 2, n \geq k+4$ we have
$$
M^*_{n+2,k+2} \leq M^*_{n,k} + 2 n, \qquad M^*_{n+1, k+1} \leq M^*_{n,k} + n.
$$
\end{lemma}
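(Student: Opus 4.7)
The plan is to construct the extremal graphs by starting from a minimum critical $k$-truss $G$ on $n$ vertices (with $M^*_{n,k}$ edges) and attaching one or two new vertices. For the second inequality, let $G_2$ denote the graph obtained from $G$ by adding a new vertex $x$ adjacent to every vertex of $V(G)$; this has $n+1$ vertices and $M^*_{n,k}+n$ edges. For the first inequality, let $G_1$ be obtained by adding two new vertices $x_1, x_2$, each adjacent to every vertex of $V(G)$ but not to each other; this has $n+2$ vertices and $M^*_{n,k}+2n$ edges. Omitting the edge $(x_1,x_2)$ is precisely what lets the first bound save one edge compared to applying the second twice.

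The first step is to check that $G_2$ is a $(k+1)$-truss and $G_1$ is a $(k+2)$-truss. Each original edge $(u,v)\in E(G)$ picks up one new triangle through $x$ (respectively two, through $x_1$ and $x_2$), so its triangle count becomes at least $k+1$ (respectively $k+2$). Each new edge $(x,v)$ or $(x_i,v)$ lies in exactly $d_G(v)$ triangles, namely $(x,v,w)$ for $w\in N_G(v)$, and $d_G(v)\ge k+2$ by Observation~\ref{oo1}, since $G$ is a critical $k$-truss on $n\ge k+4$ vertices. In $G_1$, the absence of the edge $(x_1,x_2)$ ensures the new edges still get only $d_G(v)$ triangles, which is enough.

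The main work, and the main obstacle, is criticality. For $G_2$, suppose toward contradiction that $G_2(E'')$ is a $(k+1)$-truss for some non-empty $E''\subsetneq E(G_2)$, and let $L=E''\cap E(G)$ and $S=\{v:(x,v)\in E''\}$. Triangle counting for each $(u,v)\in L$ yields
\[
\Delta_{G(L)}(u,v) + [u,v\in S] \;\ge\; k+1,
\]
so $\Delta_{G(L)}(u,v)\ge k$; hence $G(L)$ is a $k$-truss whenever $L$ is non-empty, which by criticality of $G$ forces $L=E(G)$. (The case $L=\emptyset$ is excluded because a star contains no triangles.) A short preliminary lemma---that a critical $k$-truss is never itself a $(k+1)$-truss, since otherwise removing any single edge $e$ would cost each affected edge only one triangle and still leave at least $k$, contradicting criticality---rules out $S=\emptyset$, so $S$ must be a proper non-empty subset of $V(G)$.

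The hard remaining step is to show that no such proper $S$ is consistent with the two constraints forced on it by the $(k+1)$-truss property: (i) every edge of $G$ with at least one endpoint in $V(G)\setminus S$ must have $\Delta_G\ge k+1$, and (ii) every $v\in S$ must have $|N_G(v)\cap S|\ge k+1$. Condition (i) pushes $S$ to contain every vertex incident to any $\Delta_G=k$ edge, while condition (ii) demands $G[S]$ retain minimum degree at least $k+1$, and Observation~\ref{oo1} supplies only a one-vertex degree surplus to trade off against the criticality-induced distribution of $\Delta_G=k$ edges. The argument for $G_1$ is strictly parallel: one works with two sets $S_1,S_2$ and the stronger constraint
\[
\Delta_G(u,v) + [u,v\in S_1] + [u,v\in S_2] \;\ge\; k+2,
\]
and the same preliminary lemma together with the same combinatorial tension delivers the conclusion.
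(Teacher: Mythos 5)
There is a genuine gap, and you have located it yourself: the step you call ``the hard remaining step'' is never carried out. Because you fix the new edges to be the \emph{full} star from $x$ (resp.\ from $x_1,x_2$) to all of $V(G)$, criticality of the resulting graph requires you to prove that no proper non-empty $S\subsetneq V(G)$ can simultaneously satisfy (i) every edge of $G$ with exactly $k$ triangles has both endpoints in $S$ and (ii) every $v\in S$ has at least $k+1$ neighbours inside $S$. You describe a ``combinatorial tension'' between these constraints but give no argument that they are actually incompatible; Observation~\ref{oo1} gives only a degree bound and says nothing about where the tight ($\triangle_G(e)=k$) edges sit, so nothing in what you have written rules out such an $S$. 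Indeed it is not even clear that the full-star construction is always critical: a critical $k$-truss could in principle contain a vertex none of whose incident edges is tight, and then deleting that vertex's star edge is not obviously forbidden by your conditions. A complete proof along your lines would need either to establish criticality of the full-star graph or to exhibit a critical subgraph of it with at most $M^*_{n,k}+2n$ (resp.\ $+n$) edges.

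The paper sidesteps this entirely with a small but decisive trick: instead of taking all star edges, it takes $F$ to be an \emph{inclusion-wise minimal} set of star edges such that $G'=(V\cup\{x_1,x_2\},E\cup F)$ is a $(k+2)$-truss (well-definedness is exactly your verification that the full star works). Then in the criticality argument, after the projection step you also carry out correctly (any $(k+2)$-truss $G'(E'\cup F')$ forces $G(E')$ to be a $k$-truss, hence $E'=\emptyset$ or $E'=E$), the case $E'=E$ is closed immediately: $G'(E\cup F')$ being a $(k+2)$-truss with $F'\subseteq F$ contradicts minimality of $F$ unless $F'=F$; and the case $E'=\emptyset$ is closed because the star edges alone span no triangles. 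Since $|F|\leq 2n$ regardless, the edge bound still holds. If you want to salvage your write-up, replace ``all star edges'' by ``a minimal set of star edges making the graph a $(k+2)$-truss'' and the hard step disappears. (Your preliminary observation that a critical $k$-truss is never a $(k+1)$-truss is correct but is not needed once minimality is used.)
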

\begin{proof}
  To show the first bound, let $G = (V,E)$ be a critical $k$-truss with $n$ vertices and $m = M^*_{n,k}$ edges. Create a new graph $G'$, which has all the vertices and edges of $G$, plus two new vertices $x_1, x_2$. We add a new set $F$ of edges connecting $x_1, x_2$ to the previous vertices, where $F$ is chosen so that $G' = (V \cup \{x_1, x_2 \}, E \cup F)$ has the properties that (i) $G'$ is a $(k+2)$-truss and (ii) $F$ is inclusion-wise minimal with this property.

  To show this  is well-defined, we need to show that property (i) is satisfied when $F$ is the set of all possible edges between the new vertices and the old ones. In this case, $G'$ has no isolated vertices, as $G$ is a $k$-truss and has none. Also, any edge $e \in E$ has $k$ triangles from $G$ and two new triangles in $F$. Finally, for each edge $e = (x_i, v)$ where $v \in G$, there is a triangle in $G'$ for each neighbor of $v$ in $G$. By Observation~\ref{oo1}, this implies that $\triangle(e) \geq k+2$.
 
Clearly $G'$ has $n+2$ vertices and has $m + |F| \leq m + 2n$ edges. To show  $G'$ is critical, suppose there are edge subsets $E' \subseteq E, F' \subseteq F$ such that $G' (E' \cup F')$ is a $(k+2)$-truss. Then $G(E')$ must be a $k$-truss, as removing the edges incident to $x_1, x_2$ can only remove $2$ triangles per edge. Since $G$ is critical, this implies  $E' = \emptyset$ or $E' = E$. If $E' = \emptyset$, then $G(F')$ must be a $k$-truss. However, $G(F)$ itself has no triangles, and so we must have $F' = \emptyset$, i.e. $E' \cup F' = \emptyset$. On the other hand, if $E' = E$, then $G'(E \cup F')$ is a $(k+2)$-truss; by definition of $F$, this implies that $F' = F$ and hence $E' \cup F' = E \cup F$.

The second bound is essentially identical, except that we add only a single vertex instead of two vertices.
\end{proof}

\begin{proposition}
\label{hh1}
For $n \geq k+4$, we have $M^*_{n,k} \leq n(k+1) - k^2/2 - 2 k + 1/2$.
\end{proposition}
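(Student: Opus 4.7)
The plan is to prove the upper bound by iterating the two step-up inequalities of Lemma~\ref{sslem}, bootstrapping from the exact formula $M^*_{n,2} = 3n - 6$ established in Theorem~\ref{mstar2}. Both lemma steps preserve the quantity $n - k$, so starting with the pair $(n_0, k_0) = (n - k + 2, 2)$ and climbing to $(n, k)$ keeps us in the regime where the lemma applies: the required hypothesis $n_0 \geq k_0 + 4 = 6$ is exactly the assumed condition $n \geq k + 4$, and it remains valid at every intermediate pair.

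For even $k \geq 2$, I would apply the bound $M^*_{n'+2, k'+2} \leq M^*_{n', k'} + 2n'$ a total of $t = (k - 2)/2$ times starting from $(n_0, 2)$. The telescoping cost is
\[
\sum_{i=0}^{t-1} 2 (n_0 + 2i) \;=\; 2 t n_0 + 2 t(t-1).
\]
Adding the base value $M^*_{n_0, 2} = 3 n_0 - 6$ and substituting $n_0 = n - k + 2$ and $2t = k - 2$, routine algebra collapses the expression to $(k+1) n - k^2/2 - 2k$, which is below the target by $1/2$.

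For odd $k \geq 3$, a pure chain of $+2$ steps cannot reach $k$ from $2$, so I would first spend one $+1$ step to reach $M^*_{n_0 + 1, 3} \leq M^*_{n_0, 2} + n_0 = 4 n_0 - 6$, and then apply the $+2$ step $t = (k-3)/2$ times. The accumulated cost is $2 t (n_0 + 1) + 2 t(t-1)$, and after substituting $n_0 = n - k + 2$ and $2t = k - 3$ the total simplifies to exactly $(k+1) n - k^2 / 2 - 2k + 1/2$, matching the stated bound on the nose.

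The main obstacle is purely bookkeeping: carrying out the two parity cases and verifying that the constants produced by the telescoping sums fit under the single claimed formula. There is no conceptual difficulty beyond the inductive use of Lemma~\ref{sslem}; the role of the one $+1$ step in the odd-$k$ chain is precisely to repair the parity mismatch that would otherwise prevent $k_0 = 2$ from reaching odd targets, and it happens to contribute exactly the $+1/2$ slack that appears in the statement.
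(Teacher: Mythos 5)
Your proposal is correct and follows essentially the same route as the paper: bootstrap from $M^*_{n_0,2}=3n_0-6$ (Theorem~\ref{mstar2}) and telescope the step-up inequalities of Lemma~\ref{sslem}, using the fact that both steps preserve $n-k$ so the hypothesis $n' \geq k'+4$ holds throughout. The only cosmetic difference is that you spend the parity-fixing $+1$ step at the start of the chain (passing from $k_0=2$ to $3$) while the paper applies it at the very end (from $(n-1,k-1)$ to $(n,k)$); both orderings produce the identical bound $n(k+1)-k^2/2-2k+1/2$.
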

\begin{proof}
 Suppose first that $k$ is even. From Theorem~\ref{mstar2}, we have $M^*_{i,2} = 3 i - 6$ for any integer $i \geq 6$. By repeated applications of Lemma~\ref{sslem} we get
$$
M^*_{i+2j, 2+2j} \leq M^*_{i,2} + 2 i + 2 (i+2) + \dots + 2 (i + 2 j - 2) \leq 3 i - 6 + 2 j (j + i - 1).
$$

Setting $j = (k-2)/2$ and $i = n + 2 - k$ gives $M^*_{n,k} \leq n(k+1) - k^2/2 - 2 k$.

If $k$ is odd, then by Lemma~\ref{sslem} we have $M^*_{n, k} \leq M^*_{n-1,k-1} + (n-1)$. Since  $k-1$ is even, the argument of the preceding paragraph gives  $M^*_{n-1,k-1} \leq (n-1) k - (k-1)^2/2 - 2 (k-1)$, and so
\[
M^*_{n,k} \leq (n-1) k - (k-1)^2/2 - 2 (k-1) + (n-1)  = n (k+1) - k^2/2 - 2 k + 1/2. \qedhere
\]
\end{proof}

We are now ready for the main construction to show the upper bound on $M^*_{n,k}$. This uses a type of graph embedding in the torus; we describe the construction in more detail in Appendix~\ref{construct}.

\begin{lemma}
  \label{cc-lem1}
  Suppose there exists a graph $T$ embedded in the torus with $r$ faces, where each edge appears in two distinct faces, and each face $F$ has $s_F \geq 4$ edges. Let $g = \sum_{F} s_F$. Then for $k \geq 3$ there is a critical $k$-truss with $r (k-2) + g/2$ vertices and $r \binom{k-1}{2} + (k-1 + 1/2) g$ edges.
\end{lemma}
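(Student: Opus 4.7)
The plan is to build $G$ as an explicit gadget construction on the torus embedding of $T$, then verify vertex and edge counts, the $k$-truss condition, and criticality.

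For the construction, introduce for each face $F$ a set $U_F$ of $k-2$ new face-vertices spanning a clique, and for each edge $e$ of $T$ a single boundary-vertex $w_e$ shared by the two faces containing $e$. Join every face-vertex of $U_F$ to every boundary-vertex on the boundary of $F$, and join consecutive boundary-vertices around each face in the cyclic order given by the embedding. A further collection of supplementary edges among the boundary-vertices---chosen locally using the rotation system at each $T$-vertex---is added to reach the total edge count $r\binom{k-1}{2} + (k - 1/2)g$ and to calibrate the triangle count at every edge.

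Counting vertices is immediate: $r(k-2)$ face-vertices plus $|E(T)|=g/2$ boundary-vertices. The $k$-truss property is checked by case analysis on edge type (face--face, face--boundary, boundary--boundary). The hypothesis $s_F \geq 4$ is crucial precisely for edges in a face-clique $U_F$: such an edge sees $k-4$ triangles through the remaining members of $U_F$ and at least one further triangle through each of the $\geq 4$ boundary-vertices of $F$, for a total of at least $k$. Analogous but more delicate tallies apply in the mixed and boundary--boundary cases, using the cyclic cap edges and the supplementary edges at the $T$-vertices.

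For criticality, the plan is to prove two supporting facts: (i) every edge of $G$ is in \emph{exactly} $k$ triangles, and (ii) the triangle hypergraph of $G$ (vertex set $E$, hyperedge set the triangles) is connected. Given (i) and (ii), for any non-empty $E' \subsetneq E$ the connectivity produces a triangle with some edge in $E \setminus E'$ and some other edge $e' \in E'$; then $e'$ loses that triangle in $G(E')$ and so has strictly fewer than $k$ triangles there, ruling out the $k$-truss property. The main obstacle is delivering (i): the supplementary boundary-vertex edges must be selected so that the triangle count at every single edge is exactly $k$, neither below nor above, and the two-sidedness of each $T$-edge together with the local rotation systems is what gives just enough combinatorial slack to achieve this globally tight balance.
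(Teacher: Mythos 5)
Your construction diverges from the paper's in a way that breaks down. The paper keeps $T$ itself as a subgraph of $G$ (its vertices and edges survive intact) and, for each face $F$, adds a $K_{k-1}$ joined completely to the vertices of $T$ on $F$; the vertex and edge counts then fall out of Euler's formula with nothing left to balance. Your version replaces the vertices of $T$ by one boundary-vertex per edge of $T$ and uses face-cliques of size $k-2$, which leaves an edge deficit of exactly $r(k-2)+g/2$ to be covered by unspecified ``supplementary edges'' among the $g/2$ boundary-vertices. This is not merely a missing detail: for $k$ large relative to $g$ (say $r=2$, two quadrilateral faces, so $g=8$ and only $4$ boundary-vertices, with $k=100$) you would need $200$ supplementary edges among $4$ vertices, so no choice of supplementary edges can realize the claimed edge count at all.

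The criticality plan has an independent, fatal problem. You reduce criticality to (i) every edge lies in exactly $k$ triangles and (ii) the triangle hypergraph is connected; the deduction from (i) and (ii) is sound, but (i) is unachievable. By your own tally, an edge of the face-clique of $F$ lies in $k-4+s_F$ triangles, which exceeds $k$ whenever $s_F>4$, and the lemma must handle $s_F>4$ (the application in Theorem~\ref{mnk-thm} uses faces of length $j+4$). Moreover, if every edge were in exactly $k$ triangles the total triangle count would be $mk/3$, and with $m=r\binom{k-1}{2}+(k-\tfrac{1}{2})g$ this need not be an integer (e.g.\ $k=4$, $r=2$, $g=8$ gives $m=34$ and $mk/3=136/3$). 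The paper's proof does not attempt exact triangle counts: edges of $T$ and of the face-cliques have strictly more than $k$ triangles there, and criticality is instead proved by a propagation argument --- deleting any edge of $G(F)$ forces deletion of all of $H(F)$ (walking around the face cycle), hence of all of $G(F)$, and then of the gadgets of adjacent faces, until $L=\emptyset$. You would need to replace your step (i) by a cascading argument of this type.
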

\begin{proof}  
  For each face $F$, let us define $T(F)$ to be the corresponding subgraph of $T$.  We form the graph $G$ by starting with $T$. For each face $F$, we insert a copy of $K_{k-1}$, which we denote by $C(F)$. We add an edge from every vertex of $C(F)$ to every vertex in $T(F)$, and we let $H(F)$ denote these edges. We also define $G(F) = C(F) \cup H(F) \cup T(F)$. See Figure~\ref{fig3}. 
  
  \vspace{0.2in}
\begin{figure}[H]
\begin{center}
  \includegraphics[trim = 2cm -2cm 3cm 2cm,scale=0.4,angle = 0]{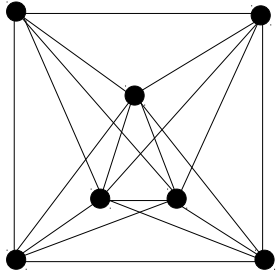}
  
\vspace{-3.2in}
\caption{This shows $G(F)$ for a single face $F$ (here, the outside square), for $k = 4$. The triangle inside the square is $C(F)$. The square on the outside is $T(F)$. Each vertex of $C(F)$ is connected to each vertex of  face $T(F)$, via an edge of $H(F)$.
  \label{fig3}}
\end{center}
\end{figure}
\vspace{-0.2in}

We first compute the number of vertices and edges in $G$. First, since each edge of $T$ appears in exactly two faces, $T$ has $\tfrac{1}{2} \sum_{F} s_F = g/2$ edges. By Euler's formula, $T$ therefore has $g/2 - r$ vertices. Each face $F$ of $T$ gives $k-1$ vertices and $\binom{k-1}{2}$ edges in $C(F)$ and $(k-1) s_F$ edges in $H(F)$. So $G$ has $(g/2 - r)  + r (k-1) = r (k-2) + g/2$ vertices and has $g/2 + \sum_{F} \bigl( \tbinom{k-1}{2} + (k-1) s_F \bigr) = r \tbinom{k-1}{2} + (k-1 + 1/2) g$ edges, as we have claimed.

  Let us check that $G$ is a $k$-truss. For an edge $e$ of $T$, the two corresponding faces include copies of $K_{k-1}$, so $e$ has at least $2 k - 2 \geq k$ triangles. An edge of $C(F)$ has $k-3$ triangles within $C(F)$ and at least $s_F \geq 4$ triangles from vertices of $T(F)$, for a total of $k+1$ triangles. An edge of $H(F)$ has $2$ triangles in $T(F)$ and $k-2$ triangles within $C(F)$, a total of $k$ triangles.  
  
  We next need to show that $G$ is critical. Suppose that $G(L)$ is a $k$-truss for $L \subseteq E$; we need to show that $L = E$ or $L = \emptyset$. We do this in four stages.
  \begin{enumerate}
  \item[(a)]   We first claim that, for every face $F$, either $L$ contains all the edges in $H(F)$, or none of them. For, suppose $L$ omits an edge $e = (u,v) \in H(F)$, where $u \in T(F)$ and $v \in C(F)$. Every other edge $e'$ in $H(F)$ incident to $u$ would then have at most $k-1$ triangles in $L$, and so also $e' \notin L$. Thus, $L$ contains no edges of $H(F)$ incident to $u$.    

    Next, consider a vertex $u'$ adjacent to $u$ in $T(F)$. Any edge $e' \in H(F)$ incident on $u'$ can now have at most $k-1$ triangles in $L$, since one of its triangles in $G(F)$ used an edge of $H(F)$ incident on $u$. Thus, $L$ must omit all the edges of $H(F)$ incident on $u'$ as well.

    Continuing this way around the cycle $T(F)$, we see that $H(F) \cap L = \emptyset$.
    
  \item[(b)] We next claim that for every face $F$, if $L$ omits any edge $e \in G(F)$ then it omits all the edges in $H(F)$. For, note that this edge $e$ participates in a triangle with some edge $e' \in H(F)$. Thus $e'$ has at most $k-1$ triangles in $L$ and must be omitted. By part (a), this implies that all the edges in $H(F)$ are omitted.

  \item[(c)]  We next claim that for every face $F$, either $L$ contains all the edges in $G(F)$, or none of them.  From parts (a) and (b), we see that if $L$ omits any such edge, then it omits all the edges in $H(F)$. This implies that each edge $e \in C(F)$ has at most $k-2$ triangles in $L$, and so $e \notin L$. Similarly, each edge $e \in T(F)$ has at most $k-1$ triangles in $L$, coming from the graph $C(F')$ where $F'$ is the other face touching $e$; thus also $e \notin L$.

\item [(d)] Finally, we claim that either $L = E$ or $L = \emptyset$. For, suppose that $G$ omits an edge $e \in G(F)$ for some face $F$. By part (c), $L$ omits every edge in $G(F)$.   Now note that if $F'$ touches $F$ in $T$, then $G(F')$ omits an edge, namely, the common edge of $F$ and $F'$.  By part (c) this implies $G(F') \cap L = \emptyset$. Since $T$ is connected, continuing this way around $T$ we see that $L = \emptyset$. \qedhere
  \end{enumerate}
  \end{proof}

We now get our final estimate for $M^*_{n,k}$:
\begin{theorem}
  \label{mnk-thm}
  For $k \geq 2$ and $n \geq k+4$, we have
  $$
  n (k/2 + 1) \leq M^*_{n,k} \leq n (k/2 + 5/2 - 1/k) + O(k^2) \leq n k/2 + O(n + k^2).
  $$
\end{theorem}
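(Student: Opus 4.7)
Since $n \geq k+4 > k+3$, Observation~\ref{oo1} implies that every vertex of a critical $k$-truss has degree at least $k+2$. Summing degrees gives $2m \geq n(k+2)$, so $m \geq n(k/2+1)$.

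\textbf{Upper bound --- splitting into cases.} Let $f(k) = k/2 + 5/2 - 1/k$. I would prove the middle bound $M^*_{n,k} \leq nf(k) + O(k^2)$; the rightmost bound follows since $n(5/2 - 1/k) = O(n)$ is absorbed into the $O(n+k^2)$ slack. When $k = 2$, Theorem~\ref{mstar2} directly gives $M^*_{n,2} = 3n-6 = nf(2) - 6$, which is within the target. When $k \geq 3$ and $n < 9k$, Proposition~\ref{hh1} gives $M^*_{n,k} \leq n(k+1) + O(1) \leq 9k(k+1) + O(1) = O(k^2)$, which is at most $nf(k) + O(k^2)$. The nontrivial case is $k \geq 3$ and $n \geq 9k$, which I handle by the toroidal construction below.

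\textbf{Toroidal construction for $k \geq 3$, $n \geq 9k$.} Choose $pq$ to be the largest multiple of $3$ with $pq \leq \lfloor n/k \rfloor$, and set $p = 3$ and $q = pq/3$; since $\lfloor n/k \rfloor \geq 9$, we have $pq \geq 9$, $q \geq 3$, and $pq \geq \lfloor n/k \rfloor - 2$. Let $T_0$ be the $p \times q$ toroidal grid: it has $pq$ vertices, $2pq$ edges, and $pq$ quadrilateral faces, each edge in two distinct faces. Applying Lemma~\ref{cc-lem1} with $r = pq$ and $g = 4pq$ yields a critical $k$-truss $G_0$ on $n_0 = pqk$ vertices with $m_0 = pq[\binom{k-1}{2} + 4k - 2] = n_0 f(k)$ edges. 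Let $j = n - n_0 \in [0, 3k-1]$, and perform $j$ edge subdivisions on $T_0$ (possibly subdividing the same edge multiple times) to produce an enlarged toroidal embedding $T$. Apply Lemma~\ref{cc-lem1} to $T$ to obtain a critical $k$-truss $G$. A direct accounting shows that each subdivision adds one vertex and one edge to $T$, increases the sizes of its two incident faces by one each, and therefore contributes one new vertex and $1 + 2(k-1) = 2k-1$ new edges to $G$. Hence $G$ has $n_0 + j = n$ vertices and $m = n_0 f(k) + j(2k-1) = nf(k) + j[(2k - 1) - f(k)] = nf(k) + O(k^2)$ edges, since $j = O(k)$ and $(2k-1) - f(k) = O(k)$.

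\textbf{Main obstacle.} The delicate step is verifying that edge subdivision preserves the critical $k$-truss produced by Lemma~\ref{cc-lem1}. The syntactic hypotheses (every edge in two distinct faces, every face of size $\geq 4$) are immediately preserved since subdivision only enlarges faces and cannot create multi-edges. The criticality proof in Lemma~\ref{cc-lem1} is based only on two ingredients: local triangle-counting arguments propagating around each face cycle (valid for any $s_F \geq 4$ and indifferent to whether boundary vertices arose from the base grid or from subdivision) and the connectivity of $T$ (preserved since subdivisions do not disconnect anything). So the lemma's conclusion carries through to $T$ verbatim.
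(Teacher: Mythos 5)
Your proof is correct, and its skeleton matches the paper's: the lower bound from Observation~\ref{oo1} via a degree sum, the cases $k=2$ and small $n$ dispatched by Theorem~\ref{mstar2} and Proposition~\ref{hh1}, and the main case by feeding a toroidal embedding into Lemma~\ref{cc-lem1}. Where you genuinely diverge is in how the embedding is produced and how the vertex count is tuned to hit $n$ exactly. The paper invokes a bespoke embedding (Lemma~\ref{construct-lemma}, built in Appendix~\ref{construct}) whose faces are $i-2$ quadrilaterals plus two $(j+4)$-cycles, absorbing the remainder $j = n \bmod k$ into the two enlarged faces. You instead start from the standard toroidal grid $C_3 \times C_q$, whose all-quadrilateral face structure is exactly what realizes the leading ratio $f(k) = k/2 + 5/2 - 1/k$ (as your identity $m_0 = n_0 f(k)$ shows), and then absorb the $O(k)$ remainder by subdividing edges. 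Your observation that subdivision preserves the hypotheses of Lemma~\ref{cc-lem1} verbatim (faces stay cycles of length $\geq 4$, each edge stays on two distinct faces, connectivity is preserved) is right, and it means you do not need to re-open the criticality argument at all. The trade-off: your route dispenses with the appendix construction entirely and makes the source of the constant $5/2 - 1/k$ transparent, at the cost of a slightly larger threshold ($n \geq 9k$ rather than $n > 2k$) before the construction kicks in — which is harmless since Proposition~\ref{hh1} covers everything below it with an $O(k^2)$ bound. The arithmetic checks out: $j \leq 3k-1$, each subdivision adds one vertex and $2k-1$ edges to the final graph, and $j\bigl((2k-1) - f(k)\bigr) = O(k^2)$.
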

\begin{proof}
  The lower bound is an immediate consequence of Observation~\ref{oo1}.  Lemma~\ref{hh1} shows that $M^*_{n,k} \leq n(k+1) - (k^2/2 + 2 k - 1/2)$; when $n \leq 2 k$, this quantity is $O(k^2)$ and we are done.  Similarly, Theorem~\ref{mstar2} already shows this result when $k = 2$.

  So suppose that $k \geq 3$ and $n > 2 k$, and we want to show the upper bound on $M^*_{n,k}$.  We write $n = i k + j$, where $j \in \{0, \dots, k-1 \}$ and $i \geq 2$.  As we show in Lemma~\ref{construct-lemma}, for these parameters there is a toroidal embedding satisfying the conditions of Lemma~\ref{cc-lem1}, whose faces consist of two $(j+4)$-cycles and $i-2$ four-cycles.

The sum of edge counts for this embedding is given by $\sum_F s_F  =4 (i-2) + 2 (j+4) = 4 i + 2 j$.  By Lemma~\ref{cc-lem1}, there is a critical $k$-truss with $i k + j = n$ vertices and $m =  i \binom{k-1}{2} + (k-1 + 1/2) (4 i + 2 j)$ edges. As $j \leq k - 1$, we have
  \[
  m = \frac{(3 k-1)(k-2) j+(k (k+5)-2) n}{2 k} \leq (k/2 + 5/2 - 1/k) n + O(k^2). \qedhere
  \]
\end{proof}

\section{Practical truss decomposition algorithm}
\label{sec:algorithm1}
We now present Algorithm~\ref{alg:ktruss_1} to compute the trussness $\tau(e)$ of every edge $e$ in a graph. Recall that $\tau(e)$ is the maximal value $k$ such that $e$ is in a $k$-truss and that, after this has been computed, we can easily compute the $k$-truss-components of $G$ by  discarding all edges $e$ with $\tau(e) < k$ and finding the connected components of the resulting graph. 

Our algorithm here is inspired by the Wang \& Cheng \cite{bib:wangcheng2012} and Huang et al. \cite{huang2015approximate} algorithms, but uses simpler data structures. To explain, let us provide a brief summary of their algorithms. At each stage, they find an edge with the fewest incident triangles in the remaining graph, remove this edge, and then update the triangle counts for all neighboring edges. If an edge $e$ has $k$ triangles when it is removed, then it has $\tau(e) = k+1$. This process continues until all the edges have been removed from the graph.

While this is conceptually simple, it can be cumbersome to implement in practice. In particular, this requires relatively heavy-weight data structures to maintain the edges sorted in increasing order of  triangle counts. For example, Wang \& Cheng use a method of \cite{bib:batagelj2003} based on a four-level hierarchy of associative arrays. (See \cite{chiba} for related data structures.) While this could certainly be implemented, it is also clearly more complex than primitive data structures such as arrays.

The key idea of our new algorithm is that, instead of sorting the edges by triangle count, we only maintain an unordered list of edges whose triangle count is below a given threshold. We can afford to periodically re-scan the graph for edges with few triangles. 

Let us begin by recalling the standard simple algorithm to \emph{enumerate} the triangles in $G$:
\begin{algorithm}[H]
\caption{\label{alg:triangle}}
\begin{algorithmic}[1]
  \For{\textbf{each} edge $e = (u,v) \in E$ with $d(u) \leq d(v)$}
\ForAll{$w \in N(u)$}
\State \textbf{if}{$( v, w ) \in E$ and $\text{ID}(w) > \max(\text{ID}(u), \text{ID}(v))$} \textbf{then} Output triangle $(u,v,w)$
  \EndFor
  \EndFor  
\end{algorithmic}
\end{algorithm}

The following bound is immediate:
\begin{observation}
  \label{output-triangle-prop}
  Algorithm~\ref{alg:triangle} runs in $O(m \bar \delta(G))$ time and outputs every triangle exactly once.
\end{observation}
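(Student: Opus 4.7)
The plan is to split the observation into two separate claims: (i) every triangle of $G$ is output exactly once, and (ii) the total work is $O(m \bar\delta(G))$.

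For (i), I would fix a triangle on vertices $a, b, c$ and, without loss of generality, assume $\text{ID}(a) < \text{ID}(b) < \text{ID}(c)$ (since $\text{ID}$ injects into $[n]$). The guard $\text{ID}(w) > \max(\text{ID}(u), \text{ID}(v))$ forces the output vertex $w$ to be the maximum-ID vertex in the triangle, so this particular triangle can only be produced while processing an edge whose endpoints are $\{a,b\}$. Interpreting the outer loop as iterating over each undirected edge once, labeling its endpoints so that $d(u) \leq d(v)$ (ties broken arbitrarily, say by $\text{ID}$), the triangle is therefore output at most once. To see it is output at least once, I would check that when $\{a,b\}$ is processed, the vertex $c$ survives all three tests: it lies in $N(u)$ since $c$ is adjacent to both endpoints of the triangle, the query $(v,c) \in E$ holds for the same reason, and $\text{ID}(c)$ exceeds both $\text{ID}(u)$ and $\text{ID}(v)$.

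For (ii), the inner loop for a single edge with orientation $d(u)\leq d(v)$ performs exactly $d(u)$ iterations, each of $O(1)$ cost, assuming edge-existence queries $(v,w)\in E$ run in constant time. I would justify this by pre-hashing $E$, or by the standard adjacency-array trick from Chiba--Nishizeki, and mention it as a preprocessing step of $O(m)$ additional time and memory. Summing over all edges then yields total work
$$
\sum_{\{u,v\}\in E} \min(d(u),d(v)) \;=\; m\, \bar\delta(G),
$$
by the very definition of $\bar\delta(G)$.

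There is no deep obstacle here; the only conceptual point is to align the ``lower-degree endpoint as $u$'' convention in the outer loop with the $\min(d(u),d(v))$ appearing inside $\bar\delta(G)$, so that the runtime sum is literally $m\bar\delta(G)$ rather than merely a bound of this order. A brief remark on breaking $d(u)=d(v)$ ties (e.g.\ by ID) is also needed to ensure each edge is processed exactly once in the outer loop.
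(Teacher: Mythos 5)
Your proposal is correct and matches the paper's (elided) argument: the max-ID guard ensures each triangle is emitted exactly once from the edge joining its two lower-ID vertices, and the runtime is the sum $\sum_{(u,v)} \min(d(u),d(v)) = m \bar\delta(G)$ by definition. The extra care you take with tie-breaking and constant-time membership queries is a reasonable implementation detail the paper leaves implicit.
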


Instead of statically listing triangles, as in Algorithm~\ref{alg:triangle}, our truss decomposition Algorithm~\ref{alg:ktruss_1}  keeps track of them as edges are removed from the graph. The main data structure is the array $\triangle$, which stores the number of triangles in the residual graph containing any given edge $e$. We also use a sentinel value  denoted $\varnothing$ to indicate that edge $e$ is no longer present in the residual graph. Other data structures include a stack $S$ and a list $L$ of edges which need to be processed.

\begin{algorithm}[H]
\caption{\label{alg:ktruss_1}}
\begin{algorithmic}[1]
\State Initialize an empty stack $S$, and initialize an edge-list $L = E$
\State Using Algorithm~\ref{alg:triangle}, compute the triangle counts $\triangle(e)$ for all $e \in E$
\For{$k=1, \dots, \sqrt{2 m}$}
\ForAll{edges $e \in L$}
\State\textbf{if} $\triangle(e) = k-1$ \textbf{then} push $e$ onto $S$
\State\textbf{if} $\triangle(e) = \varnothing$ \textbf{then} remove $e$ from $L$
\EndFor
\While{$S$ is non-empty}
\State Pop edge $e$ from $S$. Let $e = (u,v)$ such that $d(u) \leq d(v)$
\State $\triangle(e) \leftarrow \varnothing$
\ForAll{$w \in N(u)$}
\If{$( v, w ) \in E$ and $\triangle(u,w) \neq \varnothing$ and $\triangle(v,w) \neq \varnothing$}
\State $\triangle( u, w ) \gets \triangle ( u,w ) - 1$. \textbf{if} $\triangle( u,w ) = k-1$ \textbf{then} push $( u,w )$ onto $S$
\State $\triangle( v,w ) \gets \triangle( v,w ) - 1$. \textbf{if} $\triangle( v,w ) = k-1$ \textbf{then} push $( v,w )$ onto $S$
\EndIf
\EndFor
\State Output $\tau(e) = k-1$
\EndWhile
\EndFor
\end{algorithmic}
\end{algorithm}

We refer to each iteration of the loop at line (3) of Algorithm~\ref{alg:ktruss_1} as \emph{round $k$}. 

Let us first remark on the implementation of $L$. At first glance, it would appear to require a linked list, since we need to remove edges $e$ from $L$ in line (6). However, we only delete elements while we are iterating over the entire list, and so we can instead store $L$ as a simple array. When we want to delete $e$ from $L$, we just swap it to the end of the buffer instead.

We also note that round $k = 1$ can be simplified: for an edge $e$ with $\triangle(e) = 0$, we can immediately output $\tau(e) = 0$ and we do not need to push $e$ onto the stack. This optimization can be useful for graphs which have a relatively small number of triangles.

We now show Algorithm~\ref{alg:ktruss_1} has the claimed complexity and correctly computes the values $\tau(e)$. At any given point in the algorithm, we define the set of edges $e \in E$ with $\triangle(e) \neq \varnothing$ as the \emph{residual edges} and denote them by $R$.

\begin{proposition}
  \label{unique-edge-prop}
  Any edge $e$ gets added to $S$ at most once over the entire lifetime of Algorithm~\ref{alg:ktruss_1}.
\end{proposition}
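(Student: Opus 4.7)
The plan is to exploit two monotonicity properties of the algorithm. First, the stored value $\triangle(e)$ for any edge $e$ is modified only in two places: line (10) sets it to the sentinel $\varnothing$, and lines (13)/(14) decrement it by one. Moreover, the guard at line (12) prevents any further modification once $\triangle(e) = \varnothing$. Consequently, the sequence of values taken by $\triangle(e)$ over the course of the algorithm is a strictly decreasing sequence of integers followed (possibly) by the sentinel $\varnothing$, after which it is frozen. In particular, each integer value is attained at most once, and once $\varnothing$ is reached, $\triangle(e)$ never again equals any integer.

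Next, I would observe that every push of $e$ onto $S$, whether at line (5), (13), or (14), requires $\triangle(e) = k-1$ at the moment of the push, where $k$ is the current round. Suppose for contradiction that $e$ is pushed twice, first at time $t_1$ in round $k_1$ and later at time $t_2 > t_1$ in round $k_2 \geq k_1$. If $k_1 < k_2$, then before round $k_2$ begins the while loop at line (8) must have popped $e$ (since that loop does not terminate while $S$ is non-empty), and popping sets $\triangle(e) \leftarrow \varnothing$; the freezing property from the first paragraph then precludes the second push, a contradiction. So assume $k_1 = k_2 = k$, and $\triangle(e) = k-1$ at both $t_1$ and $t_2$. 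Since $\triangle(e)$ is non-increasing along its integer values, it must remain equal to $k-1$ on the entire interval $[t_1, t_2]$.

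It remains to case-split on the types of the two pushes. The scan at lines (4)--(7) occurs only once per round, so at most one of the two pushes can be via line (5). If one push is via line (5), it must be $t_1$ (since the scan precedes the while loop); the other push, at line (13) or (14), requires $\triangle(e)$ to transition from $k$ to $k-1$ immediately before $t_2$, which forces $\triangle(e) = k$ at some moment in $(t_1, t_2)$ and contradicts the constancy of $\triangle(e)$ on that interval. The case in which both pushes are via lines (13)/(14) is identical: the second push again requires $\triangle(e) = k$ just before $t_2$, the same contradiction. This completes the proof that no edge is ever pushed onto $S$ twice.

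The main obstacle, I expect, is merely the careful bookkeeping: line (5) reads $\triangle(e)$ while lines (13)/(14) write and then read it, so the push condition is phrased relative to the value after decrement in one place and before any modification in the other. Once this distinction is made explicit, the argument is just the short case analysis above together with the two monotonicity observations.
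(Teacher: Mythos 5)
Your proof is correct and takes essentially the same approach as the paper's: the cross-round case is handled by observing that a pushed edge is popped and frozen at $\varnothing$ before the next round begins, and the within-round case by the fact that $\triangle(e)$ only decreases, so it cannot return to the value $k$ required for a second push via lines (13)/(14). Your write-up simply makes explicit the monotonicity bookkeeping that the paper's two-sentence proof leaves implicit.
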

\begin{proof}
  If $e$ gets added to $S$ in round $k$, then line (10) ensures that $\triangle(e) = \varnothing$ by the end of round $k$, so that $e$ never gets added in subsequent rounds. Also, an edge $e$ can be added to $S$ at most once in a given round, since before adding $e$ to $S$ in line (13) or (14) we first decrement $\triangle(e)$.
  \end{proof}
  
\begin{proposition}
  \label{loop-invariant-prop}
Algorithm~\ref{alg:ktruss_1} maintains the following loop invariants on the data structures:
  \begin{enumerate}
  \item[(a)] The array $\triangle$ correctly records triangle counts for the graph $G(R)$.
  \item[(b)] For every edge $e \in R$, either  $\triangle(e) \geq k$ or $e \in S$.
    \item[(c)] Every edge $e \in S$ satisfies $e \in R$ and $\triangle(e) < k$.
  \end{enumerate}
\end{proposition}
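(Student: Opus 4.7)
The plan is to establish the three invariants jointly by structural induction on the algorithm's execution, identifying specific checkpoints where they should hold: namely, at the top of the \texttt{while} loop on line (8), and after each full iteration of that loop's body (i.e., after line (17)). The three parts interact (for instance, proving (b) after a pop requires knowing (a) before the pop, so that we know which decrements actually happened), so they must be proved together in one induction.

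First I would check initialization. Before round $k=1$, Algorithm~\ref{alg:triangle} has set $\triangle(e)$ equal to the true triangle count for every $e \in E$, so (a) holds with $R = E$; also $S = \emptyset$, making (c) vacuous; and (b) is automatic for round $k=1$ because any edge with $\triangle(e) = 0$ is pushed onto $S$ by lines (4)--(7) before the \texttt{while} loop begins. Next I would handle the transition between rounds: when round $k$ exits its \texttt{while} loop, $S = \emptyset$ and (b) reduces to $\triangle(e) \geq k$ for all $e \in R$; so when $k$ advances to $k+1$, every residual edge has $\triangle(e) \geq k = (k+1) - 1$, and the scan on lines (4)--(7) pushes precisely those with $\triangle(e) = k$ onto $S$, re-establishing (b) and (c) at the top of the new \texttt{while} loop (while (a) is untouched).

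The heart of the argument is the inductive step for a single iteration of the \texttt{while} loop in round $k$. Pop $e = (u,v)$ from $S$; by inductive (c), $e \in R$ and $\triangle(e) < k$. Setting $\triangle(e) \leftarrow \varnothing$ removes $e$ from $R$. For (a), I need that after the loop on lines (11)--(16), $\triangle(e')$ equals the true triangle count in the new $G(R)$ for every remaining edge $e'$. The only triangles destroyed by removing $e$ are those of the form $(u,v,w)$ with all three edges in the old $R$; for such triangles, $w \in N(u)$, $(v,w) \in E$, and both $\triangle(u,w), \triangle(v,w) \neq \varnothing$, which is exactly the condition in line (12), so these two sister edges are decremented once and only once. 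Conversely, whenever line (12) fires, a real triangle is being destroyed, so no spurious decrements occur. This reconciles $\triangle$ with $G(R)$ and restores (a). For (b), any edge $e'$ whose count drops to $k-1$ is immediately pushed onto $S$; edges whose counts were already below $k-1$ must already have been in $S$ by the previous invariant and remain so; edges whose counts stay $\geq k$ still satisfy (b); and $e$ itself is no longer in $R$. For (c), each push on line (13) or (14) happens only after decrementing, so the pushed edge has $\triangle = k-1 < k$ and $\triangle \neq \varnothing$, i.e., it lies in $R$; Proposition~\ref{unique-edge-prop} prevents redundant pushes.

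The main obstacle is the bookkeeping for (a) and (b) when edges in $S$ have their triangle counts decremented multiple times before being popped: one has to be careful that such an edge already satisfies $e \in S$ at the moment its count drops below $k-1$, so that (b) is never violated, and that line (12) correctly skips already-removed edges so that no phantom decrement corrupts (a). Once the bookkeeping for a single iteration is pinned down, extending it to the full execution is a straightforward induction.
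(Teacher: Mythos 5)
Your proposal is correct and follows essentially the same route as the paper's proof: induction over rounds and over iterations of the inner \texttt{while} loop, checking initialization, the re-scan at lines (4)--(7), and the effect of a single pop on each of (a), (b), (c), with Proposition~\ref{unique-edge-prop} handling the no-duplicate-push issue. Your write-up is somewhat more detailed than the paper's (in particular, in verifying that line (12) decrements exactly the destroyed triangles and in tracking edges whose counts drop further while already on $S$), but the underlying argument is the same.
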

\begin{proof}
  Since $R = E$ initially, these properties are satisfied  at line (2). Now suppose these properties are satisfied at the end of round $k - 1$; we want to show they remain satisfied in round $k$ as well.

  From property (b), we know that $\triangle(e) \geq k-1$ for all edges $e \in R$ at the beginning of round $k$. Lines (4) --- (7) maintain the properties. Now, consider the state just before line (9), where we are processing some edge $e \in S$. Property (c) is maintained since  $e$ does not appear elsewhere in $S$. For property (a), since $e$ gets removed from $R$ at line (10), the counts in $\triangle$ must be updated for all triangles of $R$ involving $e$. Line (13) --- (14) are reached for each such triangle and $\triangle(f)$ is indeed properly adjusted for each edge $f$ neighboring $e$.  Finally for property (b), note that if any such edge $f$ was placed into $S$, it would necessarily have $f \in R$ and $\triangle(f) = k-1$.  
  \end{proof} 

\begin{proposition}
  \label{vn1}
An edge $e$ is in $R$ at the end of round $k$ if and only if $\tau(e) \geq k$.
\end{proposition}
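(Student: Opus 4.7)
The proposition asserts that the residual set $R$ at the end of round $k$ of Algorithm~\ref{alg:ktruss_1} equals the edge set $\{ e : \tau(e) \geq k \}$. My plan is to handle the two directions separately, leveraging Proposition~\ref{loop-invariant-prop} as the main tool.

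For the forward direction, suppose $e \in R$ at the end of round $k$. At that moment, the inner \textbf{while} loop has terminated, so $S$ is empty; by invariant (b) every edge $f \in R$ satisfies $\triangle(f) \geq k$, and by invariant (a) these counts are correct for the graph $G(R)$. Thus $G(R)$ is a subgraph in which every edge is in at least $k$ triangles. Being an edge-induced subgraph, it has no isolated vertices, so $G(R)$ is itself a $k$-truss. The connected component of $G(R)$ containing $e$ is then a connected $k$-truss (not necessarily a $k$-truss-component of $G$, but a $k$-truss nonetheless), so $\tau(e) \geq k$.

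For the reverse direction, suppose $\tau(e) \geq k$; then $e$ is contained in some $k$-truss subgraph $H$ of $G$ (for instance, the $k$-truss-component witnessing $\tau(e) \geq k$). The strategy is to argue that no edge of $H$ is ever placed onto $S$ during rounds $1,\dots,k$, so in particular $e$ is never removed from $R$. Suppose for contradiction that some edge of $H$ is pushed onto $S$ during these rounds, and let $f$ be the first such edge, pushed during some round $k' \leq k$. Recall that edges leave $R$ only when they are popped from $S$ (line~(10)), and by Proposition~\ref{unique-edge-prop} each edge is pushed to $S$ at most once; consequently, at the instant immediately before $f$ is pushed, every edge of $H$ still lies in $R$. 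Now $f$ is placed onto $S$ only at line~(5), (13), or (14), which requires $\triangle(f) = k'-1$; by invariant (a), this value is the true count of triangles containing $f$ in $G(R)$ at that moment. But since all edges of $H$ lie in $R$, the triangles of $H$ incident to $f$ are all counted, giving $\triangle(f) \geq k \geq k'$, contradicting $\triangle(f) = k' - 1$.

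The only mildly delicate point, and the one I would be most careful with, is the timing in the inductive step: one must verify that at the moment $f$ is first \emph{pushed}, rather than later \emph{popped}, the other edges of $H$ still reside in $R$ so that $\triangle(f)$ really does reflect the in-$H$ triangles. This follows cleanly from the fact that $R$ is modified only at line~(10), combined with Proposition~\ref{unique-edge-prop}, which together ensure $f$ is the first edge of $H$ whose counter is set to $\varnothing$. Once this timing point is handled, the invariants of Proposition~\ref{loop-invariant-prop} do all the work, and both inclusions of the iff follow directly.
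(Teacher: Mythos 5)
Your proof is correct and follows essentially the same route as the paper: the forward direction reads off invariants (a) and (b) with $S$ empty, and the reverse direction is a first-violation argument on a $k$-truss containing $e$ that contradicts the invariants. The only cosmetic difference is that you pin the contradiction at the first \emph{push} of an $H$-edge onto $S$ (where $\triangle(f)=k'-1$ is required) while the paper pins it at the first \emph{removal} via invariant (c); both work for the same reason.
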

\begin{proof}
By Proposition~\ref{loop-invariant-prop}(b), every edge $f \in R$ has either $\triangle(f) \geq k$ or $f \in S$. Since $S$ is empty after line (18), this implies that each  $e \in R$ has at least $k$ triangles in $G(R)$, and hence $\tau(e) \geq k$.

  Conversely, suppose that some edge $e$ with $\tau(e) \geq k$ gets removed from $R$ before round $k+1$. Let $f$ be the first such edge removed and let $E'$ denote the $k$-truss-component containing $f$. Consider the state before line (9) when $f$ is removed. Since $f$ is the first such removed edge, all the other edges in $E'$ remain in $R$ and so $f$ still has at least $k$ triangles in $R$. This contradicts Proposition~\ref{loop-invariant-prop}(c).
\end{proof}

\begin{theorem}
  \label{correct-thm}
  Algorithm~\ref{alg:ktruss_1} correctly computes $\tau(e)$ for all edges $e$.
\end{theorem}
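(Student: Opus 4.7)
The plan is to combine Propositions~\ref{loop-invariant-prop}, \ref{unique-edge-prop}, and \ref{vn1} with Observation~\ref{tt-bcor} to pin down the round in which each edge is popped from $S$, and to check that the value printed at line~(17) agrees with $\tau(e)$. The only place where $\triangle(e)$ is set to $\varnothing$ is line~(10), which is executed immediately after $e$ is popped from $S$; consequently, each edge leaves $R$ in exactly one round, and by Proposition~\ref{unique-edge-prop} it is output at most once.

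First I would identify the round in which each edge is output. By Proposition~\ref{vn1}, for every edge $e$ we have $e \in R$ at the end of round $\tau(e)$ and $e \notin R$ at the end of round $\tau(e)+1$. Thus $e$ must be popped from $S$ and processed in lines (9)--(17) during round $k = \tau(e)+1$, and so the value printed at line~(17) is precisely $k-1 = \tau(e)$. To confirm that $e$ actually reaches $S$ in that round (rather than silently remaining in $R$), use Proposition~\ref{loop-invariant-prop}(b): at the end of round $k$ every edge in $R$ has $\triangle(e) \geq k$, but $\tau(e) = k-1 < k$ is incompatible with $e \in R$ by Proposition~\ref{vn1}, so $e$ must have been pushed onto $S$ and processed.

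Finally, I need to check that the outer loop of line~(3) is long enough to reach round $\tau(e)+1$ for every edge $e$. Observation~\ref{tt-bcor} yields $\tau(e) \leq \sqrt{2m+1/4} - 3/2$, and since $\sqrt{2m+1/4} \leq \sqrt{2m}+1/2$ (square both sides and use $\sqrt{2m} \geq 0$) this gives $\tau(e)+1 \leq \sqrt{2m}$, so round $\tau(e)+1$ indeed lies within the iteration range of the outer loop. This bound accounting is the only subtlety in the argument; everything else reduces to a direct application of the loop invariants already proved.
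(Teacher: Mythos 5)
Your proof is correct and follows essentially the same route as the paper: both arguments rest on Proposition~\ref{vn1} (membership in $R$ at the end of round $k$ iff $\tau(e)\geq k$) together with Observation~\ref{tt-bcor} for the loop bound. You argue forward from $\tau(e)$ to the round in which $e$ must be popped and output, whereas the paper argues backward from an output at line~(17) to the value of $\tau(e)$; the content is the same, and your explicit verification that $\tau(e)+1\leq\sqrt{2m}$ is a slightly more careful treatment of the termination condition than the paper gives.
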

\begin{proof}
  Suppose that line (17) outputs $\tau(e) = k-1$ for some edge $e$. By Proposition~\ref{loop-invariant-prop}(c), this implies that $e$ must have been in $R$ at the beginning of round $k$, and hence by Proposition~\ref{vn1} we have $\tau(e) \geq k-1$. On the other hand, $e$ got removed from $R$ at line (9), and so by Proposition~\ref{vn1} we have $\tau(e) < k$. Thus, $\tau(e)$ is indeed $k-1$.
  
Note that the termination condition $k = \sqrt{2 m}$ of line (3) follows from Observation~\ref{tt-bcor}.
  \end{proof}

\begin{theorem}
Algorithm~\ref{alg:ktruss_1} runs in $O(m \bar \delta(G))$ time and $O(m)$ memory.
\end{theorem}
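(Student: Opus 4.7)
The memory bound is immediate: the stack $S$, the edge list $L$, the triangle-count array $\triangle$, and the graph's adjacency structure each occupy $O(m)$ space (using $m \geq n/2$), and these are the only data structures the algorithm maintains. So I will focus on the runtime.

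I plan to account for the running time in three pieces: the triangle enumeration at line~(2), the stack-processing block at lines~(8)--(17), and the list-scanning block at lines~(4)--(7). Line~(2) costs $O(m \bar \delta(G))$ directly by Observation~\ref{output-triangle-prop}. For the stack-processing block, Proposition~\ref{unique-edge-prop} guarantees that each edge is popped at most once over the whole execution; when $e=(u,v)$ is popped with $d(u) \leq d(v)$ (interpreting line~(9) as using the original degrees, so that $N(u)$ in line~(11) is the static initial neighborhood), the inner loop runs in $O(d(u))$ time. Summing across all popped edges yields $\sum_{e=(u,v)} \min(d(u),d(v)) = m \bar \delta(G)$.

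The heart of the proof is bounding the total cost of the list scans. I would first establish the invariant that at the start of round $k \geq 2$, the list $L$ consists exactly of the edges with $\tau(e) \geq k-2$. By Proposition~\ref{vn1}, an edge with $\tau(e) = t$ has $\triangle(e)$ set to $\varnothing$ during round $t+1$'s stack phase and is then removed from $L$ by line~(6) in round $t+2$. Since $|L|$ at the start of round $1$ is $m$, the total scan cost across all rounds is at most
$$
O\!\left( m + \sum_{k \geq 2} |\{e : \tau(e) \geq k-2\}| \right) = O\!\left( m + \sum_e \tau(e) \right).
$$
To finish I would invoke the chain $\tau(e) \leq \triangle(e) \leq \min(d(u),d(v))$ for $e = (u,v)$: the first inequality holds because any $\tau(e)$-truss containing $e$ contributes $\tau(e)$ triangles on $e$ to $G$, and the second holds because any common neighbor $w$ of $u,v$ lies in $N(u) \cap N(v)$. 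Summing gives $\sum_e \tau(e) \leq \sum_{e=(u,v)} \min(d(u),d(v)) = m \bar \delta(G)$, which closes out the third piece within the target budget.

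The only real obstacle is getting the description of $L$ exactly right at each round boundary, since an edge with $\tau(e) = t$ transitions from ``still in $L$ with $\triangle(e) \neq \varnothing$'' in round $t+1$ to ``in $L$ with $\triangle(e) = \varnothing$'' at the start of round $t+2$, and only leaves $L$ during that round's scan; miscounting by one round here would inflate the sum. Once the invariant is verified, every remaining piece of the runtime charges cleanly to $\sum_{e=(u,v)} \min(d(u), d(v)) = m \bar\delta(G)$.
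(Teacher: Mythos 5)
Your proof is correct and follows essentially the same route as the paper's: charge line (2) to Observation~\ref{output-triangle-prop}, charge the stack phase to one $O(\min(d(u),d(v)))$ scan per edge via Proposition~\ref{unique-edge-prop}, and bound the total list-scan cost by $O(m + \sum_e \tau(e)) \leq O(m\bar\delta(G))$ using $\tau(e) \leq \min(d(u),d(v))$. Your characterization of $L$ at the start of round $k$ as the edges with $\tau(e) \geq k-2$ is in fact slightly more careful than the paper's (which states $\tau(e)\geq k-1$ and overlooks the one-round lag before a $\varnothing$ edge is purged), but this off-by-one is immaterial to the asymptotics and the arguments are otherwise identical.
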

\begin{proof}
The array $\triangle$ can be referenced or updated in $O(1)$ time. Bearing in mind our remarks about implementing $L$ as an array, all operations on $S$ and $L$ take $O(1)$ time. Observation~\ref{output-triangle-prop} shows that line (2) takes $O(m \bar \delta(G))$ time and $O(m)$ memory. The data structures $\triangle, S, L$ are indexed by edges, and so overall take $O(m)$ memory.

Next, let $L_k$ denote the value of list $L$ at the beginning of round $k$. By Proposition~\ref{vn1}, $L_k$ consists solely of edges with $\tau(e) \geq k-1$. The runtime of the loop at lines (4) --- (7) at round $k$ is linear in the length of $L_k$, and so the total work over all rounds  is a constant factor times
$$
\sum_k |L_k| \leq \sum_k \bigl | \{ e \in E: \tau(e) \geq k-1 \} \bigr | \leq \sum_e (\tau(e) + 1).
$$

For any edge $e = (u,v)$ with $\tau(e) = k$, we have $\triangle(e) \geq k$ so clearly $d(u) \geq k$ and $d(v) \geq k$. Thus $\tau(e) \leq k \leq \min(d(u), d(v))$, and so
$$
\sum_k |L_k| \leq \sum_e 1 + \min(d(u), d(v)) \leq m + m \bar \delta(G) \leq O(m \bar \delta(G)).
$$

Finally, consider lines  (8) --- (18).  By Proposition~\ref{loop-invariant-prop}(c), every edge $e = (u,v)$ appears at most once in $S$. The enumeration in line (11) takes $O(\min(d(u), d(v)))$ time. So again, the total work for this loop is $\sum_{(u,v) \in E} \min( d(u), d(v)) = m \bar \delta(G)$.
\end{proof}

\section{Truncated truss decomposition using matrix multiplication}
\label{sec:algorithm2}
We now develop a theoretically more efficient algorithm for truncated
truss decomposition up to some given bound $k_{\text{trunc}}$. This allows us to compute the $k$-truss-components for any value $k \leq k_{\text{trunc}}$, by discarding edges with $\tau(e) < k$ and running depth-first search on the resulting graph. 

The algorithm here, like Algorithm~\ref{alg:ktruss_1}, is based on removing edges and updating triangle counts. The crux of the algorithm is the following observation. When an edge is removed, it must be incident on fewer than  $k$ triangles in the residual graph. If we could enumerate these triangles efficiently, then we could potentially update their edges in only $O(k)$ time.

There is a long history of fast matrix multiplication for
triangle enumeraton \cite{bib:ayz1997, bib:yuster2005, bib:bjorklund2014}. These prior algorithms are inherently static: they treat the graph $G$ as a fixed input,  and the output is the
triangle count or list of triangles. To compute the values $\tau(e)$, by contrast, we must  \emph{dynamically}
maintain the triangle lists as edges are removed. We develop an algorithm based
on a methods  of \cite{bib:gkasieniec2009, bib:bjorklund2014} which reduce triangle enumeration to finding witnesses for boolean
matrix multiplication. 

Our algorithm uses  multiplication of rectangular matrices; see \cite{bib:lotti1983} for further details. To measure the cost of this operation, we define the function $\Gamma: [0, 1] \rightarrow \mathbb R_+$ as:
\begin{align*}
\Gamma(s) &= \inf \{ p \in \mathbb R : \exists \text{\ \ an $O(t^p)$-time algorithm for $t \times \lceil t^s \rceil$ by $\lceil t^s \rceil \times
  t$ matrix multiplication} \}
\end{align*}

It is shown in \cite{bib:legall2018} that $\Gamma(s) = 2$ for $s \leq 0.31$, and it is conjectured that $\Gamma(s) = 2$ for all $s \in [0,1]$. The value $\Gamma(1)$ is also known as the \emph{linear-algebra constant $\omega$}. By standard reductions, there is a single randomized algorithm to multiply $t \times \lceil t^s \rceil$ by $\lceil t^s \rceil \times t$ matrices in time $t^{\Gamma(s) + o(1)}$ for all $s$.

\subsection{Algorithm description}
We begin by generating $L = 10 k_{\text{trunc}} \log n$ random vertex sets $X_1, \dots, X_L$, where each vertex goes into each $X_i$ independently with probability $q = \frac{1}{k_{\text{trunc}}}$. We also define $Y_v = \{ \ell \in [L]: v \in X_\ell \}$ for each vertex $v$. The algorithm maintains two data structures corresponding to these sets, keeping track of the following information for each edge $e = (u,v)$:
\begin{enumerate}
\item The total triangle count $\triangle(e)$
\item $S( e,\ell) = \sum_{ w \in N(u) \cap N(v) \cap X_{\ell}} \text{ID}(w)$ for each $\ell \in [L]$
\end{enumerate}

An outline is shown in Algorithm~\ref{alg:ktruss_3}. We provide more detail below on the implementation and runtimes of the steps.   The proof of correctness is essentially the same as Theorem~\ref{correct-thm}, so we do not provide it here.

\begin{algorithm}[H]
\caption{\label{alg:ktruss_3}}
\begin{algorithmic}[1]
\State Generate the random vertex subsets $X_1, \dots, X_L \subseteq V$ and corresponding sets $Y_v$
\State Compute $S(e,\ell)$ and $\triangle(e)$ for every edge $e$ and every $\ell \in [L]$
\For{$k = 1, \dots, k_{\text{trunc}}$}
\While{$\triangle(e) < k$ for some edge $e \in G$}
\State Output $\tau(e) = k-1$
\State Enumerate all the triangles involving $e$ in the residual graph $G$
\State Remove $e$ from $G$ and appropriately update $S, \triangle$
\EndWhile
\EndFor
\State \textbf{for all} remaining edges in $G$ \textbf{do} Output $\tau(e) \geq k_{\text{trunc}}$
\end{algorithmic}
\end{algorithm}

We write $k_{\text{trunc}} = m^a$; since $\tau(e) \leq \sqrt{2 m}$ for all edges $e$, we assume that $k_{\text{trunc}} \leq \sqrt{2 m}$ and hence  $a \in [0, \tfrac{1}{2} + o(1)]$. We also use the notation $\tilde O(x) = x \times \text{polylog}(x)$ for any quantity $x$.

\ \\

\noindent \textbf{Line (1)} With standard sorting methods, this takes $\tilde O(L n)$ time and memory in the worst case.

\noindent \textbf{Line (2)} We use a method based on \cite{bib:yuster2005} for this step. The calculation of $\triangle(e)$ is similar to $S(e,\ell)$ so we only show the latter. 

We divide the vertices into two classes: the \emph{heavy} vertices $v$ (if $d(v) > m^{1-b}$) and the \emph{light} vertices
(if $d(v) \leq m^{1-b}$). Here, $b$ is a parameter in the range $[a,1]$ we will set later. We let $H$ denote the set of heavy vertices, and observe that $|H| \leq 2 m / m^{1-b} = 2 m^b$.

We first compute the contribution to $S$ coming from triangles with at least one light vertex. We do this by looping over light vertices and pairs of their incident edges. For each such triangle $t = (u,v,w)$, we update $S( (u,v), \ell) \leftarrow S( (u,v), \ell) + \text{ID}(w)$ for each $\ell \in Y_w$; we similarly update values $S( (u,w),\ell )$ and $S((v,w), \ell)$. This simple algorithm has expected runtime $m^{2-b + o(1)}$.

We next consider the triangles with only heavy vertices. For each index $\ell$ we form matrices $B$ and $B'$ of dimensions $|H| \times |H \cap X_{\ell}|$ as follows:
\begin{align*}
B(v,w) &= \begin{cases}
1 & \text{if $(v,w) \in E$} \\
0 & \text{if $(v,w) \notin E$} 
\end{cases} &
B'(v,w) &= \begin{cases}
\text{ID}(w) & \text{if $(v,w) \in E$} \\
0 & \text{if $(v,w) \notin E$}
\end{cases} 
\end{align*}

For an edge $e = (u,v)$ on heavy vertices $u,v$ we compute the contribution to $S(e, \ell)$ from heavy vertices $w$ as:
$$
\sum_{\substack{w \in N(u) \cap N(v) \\ w \in H \cap X_{\ell}}} \text{ID}(w) = (B^{\top} B')_{u,v}.
$$

This matrix multiplication $B^{\top} B'$ can be computed in $|H|^{\Gamma\bigl( \frac{ \log |H \cap X_{\ell}|}{\log |H|} \bigr) + o(1)}$ time and $|H|^2 = O(m^{2 b})$ memory. As $|H \cap X_{\ell}|$ is a binomial random variable with mean $q |H| \leq 2 m^{b-a}$, the total expected time for this over all indices $\ell$ is $m^{a + b \Gamma(1 - a/b) + o(1)}$.

\noindent \textbf{Line (4)} We use similar data structures to Algorithm~\ref{alg:ktruss_1} for this. For each round $k$, we check if $\triangle(e) < k$ for each edge $e$. Whenever we process an edge $e$ in lines (4) --- (9), we check if  $\triangle(f) = k-1$ for some neighboring edge $f$ and, if so, add $f$ to a stack.

\noindent \textbf{Line (6)} We use the following primary algorithm to enumerate triangles containing edge $e = (u,v)$: for each $\ell \in [L]$ with $S(e, \ell) \leq n$, let $x_{\ell}$ be the vertex with $\text{ID}(x_{\ell}) = S(e,\ell)$ and test if there are edges $(u,x_{\ell}), (v,x_{\ell})$ in  the residual graph. If so, then output a triangle $(u, v, x_{\ell})$.

Let us define $A$ to be the set of vertices $w$  where the edges $(u,w)$ and $(v,w)$ remain in the current residual graph, and let $W \subseteq A$ denote the set of all triangle vertices $x_{\ell}$ enumerated in the primary algorithm. Note that we also maintain the residual triangle count $\triangle(e) = |A|$. If $|W|$ is equal to the known value $\triangle(e)$, then we have found all the triangles containing $e$. If $|W| < \triangle(e)$, then use a second, slower, fallback option:  we simply loop over all vertices in $V$.

We argue now that $|W| = \triangle(e)$ with high probability. Note first that the update sequence in Algorithm~\ref{alg:ktruss_3} is not affected by the randomness in the sets $X_\ell$. So $A$ can be regarded as a deterministic quantity. For an arbitrary vertex $w \in A$, observe that if $A \cap X_{\ell} = \{w \}$ for some index $\ell$, then $S(e, \ell) = \text{ID}(w)$ and hence  $w$ will go into $W$. For each $\ell$, we have $A \cap X_{\ell} = \{ w \}$ with probability $q (1 - q)^{|A| - 1}$. As $|A|  \leq k \leq k_{\text{trunc}}$ and $q = \tfrac{1}{k_{\text{trunc}}}$ and $L = 10 k_{\text{trunc}} \log n$, this shows that
$$
\Pr(w \notin W) \leq (1 - q (1 - q)^{k_{\text{trunc}}})^L \leq n^{-3.6}.
$$

By a union bound over $w \in A$, this implies that $\Pr( W = A) \geq 1 -  n^{-2.6}$. The primary algorithm of generating the set $W$ takes $O(L) = O(k_{\text{trunc}} \log n)$ time. The fallback algorithm takes $O(n)$ time in the worst case, so it contributes expected runtime at most $n^{-2.6} \times O(n) = o(1)$.

\noindent \textbf{Line (7)} Suppose we are removing edge $e = (u,v)$, and we have enumerated all the triangles containing $e$ in the residual graph.  For each such triangle $(u,v,w)$, we update $\triangle$ by decrementing $\triangle( u, w)$ and $\triangle(v, w)$. We update $S$ by setting $S((u,w), \ell) \leftarrow S((u,w),\ell) - \text{ID}(v)$ for each $\ell \in Y_v$ and $S((v,w), \ell) \leftarrow S((v,w), \ell) - \text{ID}(u)$ for each $\ell \in Y_u$.

Since there are at most $k \leq k_{\text{trunc}}$ triangles, and since the sets $Y_u$ and $Y_v$ have expected size $O(\log n)$, the overall expected time for line (8) is $O(k_{\text{trunc}} \log n)$.

\subsection{Putting it together}
We can now calculate the overall complexity of the algorithm.
\begin{theorem}
  \label{thm1}
  For any real number $r$ satisfying $\Gamma\bigl( \tfrac{2 - a(2+r)}{2-a} \bigr) \leq r \leq 2/a - 2$,   Algorithm~\ref{alg:ktruss_3} can be implemented to run in $m^{\frac{2 r +  a}{r+1}+o(1)}$ expected time and $\tilde O(m^{\frac{4 - 2 a}{r+1}} + m^{1+a})$ memory.
\end{theorem}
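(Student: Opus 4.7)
The plan is to introduce the heavy/light threshold $b$ used in line 2 of Algorithm~\ref{alg:ktruss_3} and to make the substitution $b = (2-a)/(r+1)$. This immediately yields $2-b = (2r+a)/(r+1)$, matching the target time exponent, and $2b = (4-2a)/(r+1)$, matching the matrix-storage memory exponent. Moreover, the two hypotheses on $r$ translate directly into conditions on $b$: the inequality $r \leq 2/a - 2$ is equivalent to $a \leq b$ (and $b \leq 1$ follows automatically from $r \geq \Gamma \geq 2$), while $\Gamma\bigl(\frac{2 - a(2+r)}{2-a}\bigr) \leq r$ is exactly $\Gamma(1 - a/b) \leq (2 - a - b)/b$.

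I would then verify the bottleneck time of line 2. The light-vertex enumeration contributes $m^{2-b+o(1)}$ expected time. The heavy-vertex stage performs $L = \tilde O(m^a)$ rectangular matrix multiplications of dimensions $|H| \times |H \cap X_\ell|$ with $|H| = O(m^b)$ and $\mathbb E[|H \cap X_\ell|] = O(m^{b-a})$; standard Chernoff bounds show $|H \cap X_\ell| = O(m^{b-a})$ with probability $1 - n^{-\Omega(1)}$, and the rare large realizations contribute only $o(1)$ in expectation. Each such product therefore costs $m^{b\Gamma(1-a/b)+o(1)}$, giving total $m^{a + b\Gamma(1-a/b)+o(1)}$. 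The hypothesis $\Gamma(1-a/b) \leq (2-a-b)/b$ reduces this to $m^{2-b+o(1)}$, matching the light term. Hence line 2 runs in $m^{(2r+a)/(r+1)+o(1)}$ expected time.

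All other steps fit into $\tilde O(m^{1+a})$. Line 1 takes $\tilde O(Ln) = \tilde O(m^{1+a})$ time and memory. For lines 4--8, each edge removal contributes $O(L)$ work for primary triangle enumeration (the fallback adds only $o(1)$ in expectation since it is invoked with probability $O(n^{-2.6})$) plus $O(k_{\text{trunc}} \log n)$ amortized update work across its incident triangles, and the scanning in line 4 across all rounds contributes $\sum_k |L_k| = \tilde O(m \cdot k_{\text{trunc}})$ by a direct adaptation of the argument for Algorithm~\ref{alg:ktruss_1}. Since $\Gamma \geq 2$ forces $r \geq 2$, and since $a \leq 1/2 + o(1)$, elementary algebra yields $1+a \leq (2r+a)/(r+1)$, so these auxiliary terms are dominated by the line-2 cost. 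For memory: the heavy-stage matrices use $O(m^{2b}) = O(m^{(4-2a)/(r+1)})$ space, and the remaining data structures (the arrays $\triangle$ and $S$, the vertex subsets, and the sets $Y_v$) total $\tilde O(mL + Ln) = \tilde O(m^{1+a})$.

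The main obstacle is the exponent bookkeeping --- verifying that the single substitution $b = (2-a)/(r+1)$ simultaneously converts the $\Gamma$-hypothesis into the balance inequality $a + b\Gamma(1-a/b) \leq 2-b$, converts the range constraint on $r$ into $a \leq b \leq 1$, and produces the announced exponents $\frac{2r+a}{r+1}$ and $\frac{4-2a}{r+1}$. Once these equivalences are checked, the rest of the proof is a routine accounting over the steps of Algorithm~\ref{alg:ktruss_3}.
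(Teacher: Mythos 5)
Your proposal is correct and follows essentially the same route as the paper: the paper's proof also sets $b = \tfrac{2-a}{r+1}$, observes that the hypotheses on $r$ become $a \leq b \leq 1$ together with the balance condition $a + b\Gamma(1-a/b) \leq 2-b$ for the two stages of line (2), and bounds all remaining lines by $m^{1+a+o(1)} \leq m^{\frac{2r+a}{r+1}+o(1)}$. Your additional details (the Chernoff concentration for $|H \cap X_\ell|$ and the explicit $\sum_k |L_k|$ accounting) are finer-grained than what the paper writes out, but they do not change the argument.
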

\begin{proof}
  The algorithm uses $\tilde O(m^{1+a})$ memory to store the arrays $S(e, \ell)$ and $\triangle(e)$. Line (1) takes $\tilde O(L n)$ time, which is at most $m^{1+a+o(1)}$.

  Lines (5) --- (8) are executed at most once per edge, so their total expected time is at most $\tilde O(m k_{\text{trunc}}) \leq m^{1+a+o(1)}$.  Since $r \geq \Gamma\bigl( \tfrac{2 - a(2+r)}{2-a} \bigr) \geq 2$ and $a \leq \tfrac{1}{2} + o(1)$, this is at most $m^{\frac{2 r+a}{r+1} + o(1)}$.
  
  For line (2), we set $b = \frac{2-a}{r+1}$, which is in the range $a \leq b \leq 1$.  This uses $\tilde O(m^{\frac{4-2a}{r+1}})$ memory. The runtime is $m^{2-b+o(1)} + m^{a + b \Gamma(1 - a/b) + o(1)}$, which by our bound on $r$ is at most $m^{\frac{2 r + a}{r+1} + o(1)}$. 
\end{proof}

The current known
 bounds on $\Gamma(s)$ are highly non-linear functions of $s$. Thus, the parameters for Theorem~\ref{thm1} cannot be optimized in closed form. We can obtain slightly crude estimates in terms of $\omega$ or using analysis of $\Gamma$ from \cite{bib:legall2018}. We focus on the case where $k_{\text{trunc}}$ is small; note that even $k_{\text{trunc}} = O(1)$ may be relevant for cohesive subnetworks of real-world graphs.

\begin{theorem}
  \label{pp1}
  \begin{enumerate}
  \item Algorithm~\ref{alg:ktruss_3} runs in $m^{\frac{2 \omega + a}{\omega+1} + o(1)}$  time and $\tilde O(m^{\frac{4 - 2a}{\omega+1}} + m^{1+a})$ memory.
  \item If $ \omega = 2$, Algorithm~\ref{alg:ktruss_3} runs in  $m^{4/3 + o(1)} k_{\text{trunc}}^{1/3}$  time and $\tilde O(m^{4/3} + m k_{\text{trunc}})$ memory.
    \item For $k_{\text{trunc}} \leq m^{0.029}$, Algorithm~\ref{alg:ktruss_3} runs in
      $O(m^{1.4071} k_{\text{trunc}}^{0.0648})$  time and $O(m^{1.1860})$ memory.
      \end{enumerate}
  \end{theorem}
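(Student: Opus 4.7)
The plan is to derive each of the three bounds by instantiating Theorem~\ref{thm1} with an appropriate choice of the free parameter $r$, invoking three standard facts about the rectangular matrix multiplication exponent: that $\Gamma$ is non-decreasing in its argument, that $\Gamma(s) \geq 2$ by the output-size lower bound, and that $\Gamma(1) = \omega$.

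For part (1), I would take $r = \omega$. Since $\tfrac{2-a(2+\omega)}{2-a} \leq 1$ for $a \geq 0$, monotonicity gives $\Gamma\bigl(\tfrac{2-a(2+\omega)}{2-a}\bigr) \leq \Gamma(1) = \omega = r$, and the second constraint $r \leq 2/a - 2$ becomes $a \leq 2/(\omega+2)$. In this range Theorem~\ref{thm1} directly yields the claim. For the complementary range $a > 2/(\omega+2)$, applying Theorem~\ref{thm1} with $r' = 2/a - 2 \geq 2$ (valid since $\Gamma(0) = 2 \leq r'$) gives a runtime exponent $(2r'+a)/(r'+1) = 2 - a$, which one checks algebraically is at most $(2\omega+a)/(\omega+1)$ precisely when $a \geq 2/(\omega+2)$, exactly the complementary regime; the corresponding memory bound $\tilde O(m^{2a} + m^{1+a}) = \tilde O(m^{1+a})$ lies within the claimed envelope.

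For part (2), the hypothesis $\omega = 2$ combined with the lower bound $\Gamma \geq 2$ forces $\Gamma(s) = 2$ throughout $[0,1]$. Setting $r = 2$, both constraints of Theorem~\ref{thm1} are satisfied for every $a \leq 1/2$, giving runtime $m^{(4+a)/3 + o(1)} = m^{4/3 + o(1)} k_{\text{trunc}}^{1/3}$ and memory $\tilde O(m^{(4-2a)/3} + m^{1+a}) \leq \tilde O(m^{4/3} + m k_{\text{trunc}})$.

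For part (3), I would pick, for each $a \in [0, 0.029]$, the smallest $r$ satisfying $\Gamma\bigl(\tfrac{2-a(2+r)}{2-a}\bigr) \leq r$; since $(2r+a)/(r+1)$ is monotone increasing in $r$, this smallest valid $r$ is the optimizer. At $a = 0$ the solution is $r = \omega \approx 2.3729$, for which the runtime and memory exponents $2\omega/(\omega+1)$ and $4/(\omega+1)$ evaluate to approximately $1.4071$ and $1.1860$. For small positive $a$ the optimal $r$ traces a curve along the graph of $\Gamma$ just below $s = 1$; substituting the best known tabulated upper bounds on $\Gamma$ near $s = 1$ from \cite{bib:legall2018} and carrying out the numerical minimization produces the claimed coefficient $0.0648$ on $\log k_{\text{trunc}}$ throughout $a \leq 0.029$. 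The main obstacle is this last step, which depends on the non-closed-form tabulated bounds on $\Gamma$ rather than a clean algebraic identity; parts (1) and (2) reduce to checking two inequality constraints and direct substitution.
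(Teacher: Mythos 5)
Parts (1) and (2) of your argument are correct and essentially follow the paper's route: instantiate Theorem~\ref{thm1} with $r=\omega$ for small $a$ and switch to a second choice of $r$ for large $a$. Your variant for the large-$a$ regime ($r'=2/a-2$ with the exact threshold $a=2/(\omega+2)$, versus the paper's fixed $r=2$ with threshold $a=0.4$) checks out: the argument of $\Gamma$ becomes $0$, the exponent collapses to $2-a$, the comparison $2-a\le(2\omega+a)/(\omega+1)$ holds exactly when $a\ge 2/(\omega+2)$, and the memory term $m^{2a}\le m^{1+a}$ stays inside the claimed envelope. Part (2) as you state it is also fine, since $\Gamma\ge 2$ and $\Gamma(1)=2$ force $\Gamma\equiv 2$ by monotonicity.

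Part (3) has a genuine gap. You verify the exponents only at the single point $a=0$ (where the runtime exponent is $2\omega/(\omega+1)\approx 1.4071$ and there is no $k_{\text{trunc}}$ dependence at all) and then assert that ``carrying out the numerical minimization produces the claimed coefficient $0.0648$.'' That numerical work \emph{is} the content of part (3): the claim $O(m^{1.4071}k_{\text{trunc}}^{0.0648})$ means the runtime exponent is at most $1.4071+0.0648a$ uniformly on $a\in[0,0.029]$, and nothing in your sketch certifies the constant $0.0648$. In particular, the tabulated data alone is not enough: using only monotonicity ($\Gamma(s)\le\Gamma(1)=\omega$) forces $r=\omega$ and yields the much worse coefficient $1/(\omega+1)\approx 0.2965$. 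The missing ingredient is a bound on $\Gamma$ \emph{strictly between} the tabulated points: the paper combines $\Gamma(0.95)\le 2.333789$ from \cite{bib:legall2018}, $\Gamma(1)\le 2.3728639$ from \cite{bib:legall2014}, and the convexity of $\Gamma$ \cite{bib:lotti1983} to get the linear bound $\Gamma(x)\le 2.333789+0.781498(x-0.95)$ on $[0.95,1]$; it then exhibits the explicit choice $r=14.43632-\frac{110.419519}{9.15323-a}$, checks $\Gamma(s)\le r\le 2/a-2$ for $s=\frac{2-a(2+r)}{2-a}\ge 0.95$ across the whole range, and shows $\frac{2r+a}{r+1}\le 1.40704+0.0648a$. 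You would also need to confirm that the memory exponent $\max\bigl(1+a,\frac{4-2a}{r+1}\bigr)$ stays below $1.1860$ as $r$ decreases with $a$, which again you check only at $a=0$. To repair the proof, supply the convexity interpolation (or an equivalent explicit upper bound on $\Gamma$ near $s=1$) and verify the two resulting inequalities over all $a\le 0.029$, not just at the endpoint.
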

\begin{proof}
  \begin{enumerate}
  \item If $a \leq 0.4$, then  apply Theorem~\ref{thm1} with $r = \omega$;
    note then that $2/a - 2 \geq 3 \geq r$ and $\Gamma( \frac{2 - a(2+r)}{2-a}) \leq \omega = r$ as required. This runs in the claimed time and memory.

    If $a > 0.4$, then  apply Theorem~\ref{thm1} with $r = 2$. Note then that $2/a - 2 \geq 2 = r$ and $\Gamma( \frac{2 - a(2+r)}{2-a} ) \leq \Gamma( 0.25) = 2 = r$. Algorithm~\ref{alg:ktruss_3} runs in $m^{\frac{4 +  a}{3}+o(1)}$  time and $\tilde O(m^{\frac{4 - 2 a}{3}} + m^{1+a})$ memory. Since $\omega \geq 2$, this is also in the required range.

  \item This follows immediately from the preceding paragraphs, setting $\omega = 2$.

    \item  From \cite{bib:legall2018} we see that $\Gamma(0.95) \leq
      2.333789$ and from \cite{bib:legall2014} we see that $\omega = \Gamma(1) \leq 2.3728639$. As the function $\Gamma$ is concave-up \cite{bib:lotti1983}, this implies that when $0.95 \leq x \leq 1$ we have
      \begin{equation}
\label{bb1}
\Gamma(x) \leq 2.333789 + 0.781498 (x - 0.95)
\end{equation}

Set $r = 14.43632 -\frac{110.419519}{9.15323 - a}$ and let $s = \frac{2 -
  a(2+r)}{2 - a}$. We have $s \geq 0.95$ for $a \leq 0.029$, and from Eq.~(\ref{bb1}) we can verify that $\Gamma(s) \leq r \leq 2/a - 2$ for $a$ in this range. Further calculus shows that then $\frac{2 r + a}{r+1} \leq 1.40704 + 0.0648 a$ so the  runtime is $O(m^{1.4071} k_{\text{trunc}}^{0.0648})$.  The memory is $\tilde O(m^{1+a} + m^{\frac{4 - 2 a}{r+1}})$, which in this range is $O(m^{1.1860})$. \qedhere
\end{enumerate}
\end{proof}

\section{Acknowledgments}
Thanks to Michael Murphy, Noah Streib, Lowell Adams, Tad White and Randy Dougherty for ideas and discussion. Thanks to the anonymous journal and conference reviewers for helpful suggestions and pointing us to some useful references.

\appendix

\section{Properties of degeneracy and average degeneracy}
\label{app:arb}
The \emph{degeneracy} of graph $G$, denoted $\delta(G)$, is the minimum value such that for every vertex set $U \subseteq V$, there is a vertex in $G[U]$ of degree at most $\delta(G)$. Many graph classes have bounded degeneracy, for example, planar graphs have degeneracy at most $5$.

We remark that there is a connection between degeneracy and graph trussness:
\begin{proposition}
  \label{tau-bnd}
For any edge $e$ we have $\tau(e) \leq \delta(G) - 1$.
\end{proposition}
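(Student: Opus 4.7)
The proof should follow directly from combining the lower bound on vertex degrees in a $k$-truss (Observation~\ref{obs:mintriangle}) with the definition of degeneracy. The plan is to pick a $k$-truss-component witnessing the value of $\tau(e)$, and apply the degeneracy property to its vertex set.

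In more detail, suppose $\tau(e) = k$. Let $L$ be a $k$-truss-component of $G$ containing $e$, and let $U$ be the vertex set of the edge-induced subgraph $G(L)$. By Observation~\ref{obs:mintriangle} applied to the $k$-truss $G(L)$, every vertex $v \in U$ has degree at least $k+1$ in $G(L)$. The key (very mild) subtlety is that degeneracy is phrased in terms of the \emph{vertex}-induced subgraph $G[U]$, not the edge-induced subgraph $G(L)$. However, $G(L)$ is a subgraph of $G[U]$ on the same vertex set, so for every $v \in U$ we have
\[
d_{G[U]}(v) \geq d_{G(L)}(v) \geq k+1.
\]

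Now I apply the definition of degeneracy to the set $U$: there must exist some vertex $v \in U$ with $d_{G[U]}(v) \leq \delta(G)$. Combining with the previous inequality gives $k+1 \leq \delta(G)$, i.e.\ $\tau(e) \leq \delta(G) - 1$, as claimed.

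There is essentially no obstacle here; the only thing worth being careful about is the distinction between the edge-induced subgraph (which carries the $k$-truss structure) and the vertex-induced subgraph (which is what the degeneracy bound applies to), and noting that passing from the former to the latter only increases vertex degrees, so the lower bound of $k+1$ is preserved.
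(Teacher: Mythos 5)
Your proof is correct and follows essentially the same route as the paper's: take the $k$-truss-component witnessing $\tau(e)=k$, observe via Observation~\ref{obs:mintriangle} that every vertex of its vertex set $U$ has degree at least $k+1$, and note that $G[U]$ contains the edge-induced subgraph so the degeneracy bound forces $\delta(G) \geq k+1$. The edge-induced versus vertex-induced distinction you flag is handled identically (and more tersely) in the paper's proof.
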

\begin{proof}
  Consider any $k$-truss-component $E'$, and let $U$ denote the set of vertices with at least one edge in $E'$. Then $G[U]$ contains $G(E')$. By Observation~\ref{obs:mintriangle}, every vertex in $G(E')$ has degree at least $k+1$. So $G[U]$ has minimum degree at least $k+1$, which shows that $\delta(G) \geq k+1$.
\end{proof}

A number of previous triangle and trussness algorithms such as \cite{chiba, huang2015approximate} have analyzed runtime in terms of degeneracy as well as a closely related graph parameter known as \emph{arboricity}, denoted $\alpha(G)$. This is also related to a parameter known as the \emph{pseudo-arboricity} $\alpha^*(G)$. See \cite{picard} for definitions and background. The following are some standard and well-known bounds:
\begin{proposition}
  \label{app-arb1}
  Consider a graph $G = (V,E)$ with $m = |E|$ edges.
  \begin{itemize}
  \item $G$ has an edge-orientation in which every vertex has out-degree at most $\alpha^*(G)$.
  \item  $G$ has an acyclic edge-orientation in which every vertex has out-degree at most $\delta(G)$.
  \item $\alpha(G) \leq \sqrt{2 m}$.
  \item $\alpha^*(G) \leq \alpha(G) \leq \alpha^*(G) + 1$.
    \item $\alpha(G) \leq \delta(G) < 2 \alpha^*(G)$.
    \end{itemize}
\end{proposition}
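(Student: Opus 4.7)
The plan is to prove the five bullets in an order that lets earlier claims feed the later ones. I will build up two orientations first, then deduce everything else from them together with the Nash--Williams / Hakimi edge-density formulas.

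For the $\alpha^*(G)$-orientation (bullet 1), I will decompose $G$ into $\alpha^*(G)$ pseudo-forests by definition and orient each with max out-degree one: on each tree component direct all edges toward an arbitrarily chosen root, and on each unicyclic component orient its unique cycle consistently around itself and direct the attached trees inward toward the cycle. Summing these orientations over the $\alpha^*(G)$ pseudo-forests gives the claim. For the acyclic $\delta(G)$-orientation (bullet 2), I will use the degeneracy peeling ordering $v_1, \ldots, v_n$, where each $v_i$ is chosen to have minimum degree (hence at most $\delta(G)$) in the residual graph $G[\{v_i,\ldots,v_n\}]$; orienting every edge from its earlier endpoint to its later endpoint yields an acyclic orientation in which $\deg^+(v_i)$ equals the residual degree of $v_i$ at removal, hence at most $\delta(G)$.

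The first half of bullet 5 and then bullet 3 follow quickly. From the acyclic $\delta(G)$-orientation, labeling each vertex's out-edges $1, \ldots, \deg^+(v)$ partitions $E$ into at most $\delta(G)$ classes; each class has max out-degree at most one in an acyclic orientation, hence is a forest, giving $\alpha(G) \leq \delta(G)$. For $\alpha(G) \leq \sqrt{2m}$, if $\delta(G) = d$ then some subgraph has min-degree at least $d$, so it contains at least $d+1$ vertices and at least $d(d+1)/2$ edges; thus $m \geq d^2/2$ and $\delta(G) \leq \sqrt{2m}$, and the claim follows from $\alpha(G) \leq \delta(G)$.

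For bullet 4, the inequality $\alpha^*(G) \leq \alpha(G)$ is immediate since every forest is a pseudo-forest. For $\alpha(G) \leq \alpha^*(G)+1$, I will invoke the Nash--Williams and Hakimi formulas $\alpha(G) = \max_H \lceil |E(H)|/(|V(H)|-1)\rceil$ and $\alpha^*(G) = \max_H \lceil |E(H)|/|V(H)|\rceil$. Setting $k = \alpha^*(G)$, every subgraph $H$ satisfies $|E(H)| \leq k|V(H)|$; a short case split on $|V(H)| \geq k+1$ (where $k|V(H)| \leq (k+1)(|V(H)|-1)$) versus $|V(H)| \leq k$ (where $|E(H)| \leq \binom{|V(H)|}{2}$ implies the same bound directly) yields $|E(H)|/(|V(H)|-1) \leq k+1$. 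For the strict inequality $\delta(G) < 2\alpha^*(G)$ in bullet 5, Hakimi applied to the witness subgraph of min-degree $\delta(G)$ gives $\alpha^*(G) \geq \lceil \delta(G)/2\rceil$, which is already strict when $\delta(G)$ is odd. The main obstacle is the even case: the natural counting argument only delivers $\delta(G) \leq 2\alpha^*(G)$, and I would obtain strictness either by passing to a carefully chosen proper sub-witness with larger density ratio, or via the orientation characterization of $\alpha^*$ (equality would force the witness $H$ to be $\delta(G)$-regular with a perfectly balanced in/out orientation, which can be excluded through a minimality argument on $H$ and the assumption $m \geq n/2$).
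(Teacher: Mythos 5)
First, a framing note: the paper itself offers no proof of this proposition --- it is asserted as a list of ``standard and well-known bounds'' with a pointer to \cite{picard} --- so there is no in-paper argument to match, and your proposal must stand on its own. Your proofs of the first four bullets and of $\alpha(G) \leq \delta(G)$ are correct and are the standard ones: the pseudoforest-by-pseudoforest orientation (tree components toward a root, unicyclic components around the cycle), the peeling-order acyclic orientation, the partition of $E$ by out-edge labels (your implicit claim that a class with out-degree at most one and no directed cycle is a forest is sound, since an undirected cycle in such a class would force every vertex on it to have out-degree exactly one along the cycle, yielding a directed cycle), the minimum-degree-$\delta$ witness giving $\delta(G) \leq \sqrt{2m}$, and the Nash--Williams/Hakimi case split for $\alpha(G) \leq \alpha^*(G) + 1$.

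The genuine gap is the strict inequality $\delta(G) < 2\alpha^*(G)$, which you rightly flagged as the obstacle --- but neither of your proposed repairs can succeed, because the strict form is false as stated. Any $2k$-regular graph whose maximum subgraph density equals $k$ is a counterexample: a cycle $C_n$ has $\delta = 2$ and $\alpha^* = 1$, and $K_5$ has $\delta = 4$ and $\alpha^* = 2$. Indeed, for a $2k$-regular graph every subgraph $H$ satisfies $|E(H)| \leq k|V(H)|$, so Hakimi's formula gives $\alpha^* = k$, while the graph itself (minimum degree $2k$) witnesses $\delta = 2k = 2\alpha^*$. These graphs have no isolated vertices and satisfy $m \geq n/2$, so the paper's standing assumptions do not exclude them; the ``$\delta(G)$-regular witness with a perfectly balanced orientation'' that you hoped to rule out by a minimality argument genuinely occurs (an Eulerian orientation of $C_n$ or $K_5$ is exactly such a balanced orientation). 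What your counting argument does deliver --- $\delta(G) \leq 2\alpha^*(G)$, strict when $\delta(G)$ is odd --- is the correct standard statement, and it is all the paper actually uses: the remark following the proposition needs only the asymptotic equivalence of $\alpha$, $\alpha^*$, and $\delta$. So this is a slip in the proposition's statement rather than a missing idea on your part; the right fix is to weaken the final bullet to $\alpha(G) \leq \delta(G) \leq 2\alpha^*(G)$, at which point your proof is complete.
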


Note that, in light of the last two bounds in Proposition~\ref{app-arb1}, asymptotic runtime bounds are equivalent for $\alpha, \alpha^*$, and $\delta$.

For any edge $e = (u,v)$, define $h(e) = \min(d(u), d(v))$. With this notation, we recall the definition of average degeneracy as $\bar \delta(G) = \frac{1}{m} \sum_{\text{edges $e$}} h(e)$. The following result shows some intuition behind the term ``average degeneracy.''  To state it informally, ``most'' edges of $G$ have degeneracy ``not much larger'' than $\bar \delta(G)$.
\begin{proposition}
  Consider a graph $G = (V,E)$.
  \begin{enumerate}
  \item The average degeneracy $\bar \delta(G)$ satisfies $\bar \delta(G) \leq 2 \alpha^*(G)$.
  \item For any $x \in (0,1)$, there is an edge-set $L \subseteq E$ with $|L| \geq (1-x) |E|$ and $\delta(G(L)) \leq \bar \delta(G) / x$.
  \end{enumerate} 
\end{proposition}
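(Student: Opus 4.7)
For part (1), I plan to combine the orientation guarantee of Proposition~\ref{app-arb1} with a pointwise bound on $h(e)$. By that proposition, $G$ admits an edge-orientation in which every vertex has out-degree at most $\alpha^*(G)$; let $t(e)$ denote the tail of edge $e$ under this orientation. The key observation is that, regardless of which endpoint is the tail,
$$
h(e) \;=\; \min(d(u),d(v)) \;\leq\; d(t(e)).
$$
Summing over all edges and using $\sum_v d^{\mathrm{out}}(v) = m$ and $\sum_v d(v) = 2m$,
$$
m\,\bar\delta(G) \;=\; \sum_e h(e) \;\leq\; \sum_e d(t(e)) \;=\; \sum_v d(v)\,d^{\mathrm{out}}(v) \;\leq\; \alpha^*(G)\sum_v d(v) \;=\; 2\alpha^*(G)\,m,
$$
which yields the claim upon dividing by $m$.

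For part (2), my plan is a one-shot threshold argument rather than iterative peeling. Set $k = \bar\delta(G)/x$ and take
$$
L \;=\; \{\,e \in E : h(e) \leq k\,\}.
$$
Markov's inequality applied to $h$ under the uniform distribution on $E$ gives $|E\setminus L| \leq \frac{1}{k}\sum_e h(e) = xm$, so $|L| \geq (1-x)|E|$ as required. To bound the degeneracy, I will pick an arbitrary nonempty subset $U \subseteq V(L)$ and exhibit a vertex of degree at most $k$ in $G(L)[U]$. Choose $v \in U$ with minimum value of $d_G(v)$, where $d_G$ denotes degree in the original graph $G$. If $v$ has no neighbor in $U$ via $L$, its degree in $G(L)[U]$ is zero. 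Otherwise, pick any such incident edge $(u,v) \in L$ with $u \in U$; by extremality, $d_G(u) \geq d_G(v)$, so $h((u,v)) = d_G(v)$, and because $(u,v)\in L$ this forces $d_G(v) \leq k$. Every $G(L)[U]$-neighbor of $v$ is a fortiori a $G$-neighbor, so $v$ has degree at most $d_G(v) \leq k$ in $G(L)[U]$, as needed.

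The only delicate point is the bookkeeping in (2): we bound the $G(L)[U]$-degree of $v$ by its \emph{global} $G$-degree, and this is only useful because the minimizer $v$ is chosen with respect to $d_G$ rather than any residual degree. That choice is what lets a single threshold $k$ replace an iterative peeling procedure, since membership of an incident edge in $L$ directly constrains $d_G(v)$. Part (1) is then essentially immediate from the orientation result, modulo the simple but crucial inequality $h(e) \leq d(t(e))$.
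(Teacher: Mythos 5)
Your proof is correct and follows essentially the same route as the paper: part (1) is the identical orientation/double-counting argument via $h(e) \leq d(\mathrm{tail}(e))$, and part (2) uses the same threshold set $L = \{e : h(e) \leq \bar\delta(G)/x\}$ with a Markov-type bound on $|E \setminus L|$. The only (cosmetic) difference is how the degeneracy bound is certified: the paper counts edges of $G(L)[U]$ using the fact that every edge of $L$ has an endpoint of $G$-degree at most $\bar\delta(G)/x$, while you directly exhibit a low-degree vertex by taking the $d_G$-minimizer in $U$, which is if anything slightly more direct.
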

\begin{proof}
The first result (in a slightly weaker form) was shown by \cite{chiba}; for completeness, we provide a version of their proof here.  By Proposition~\ref{app-arb1} there exists an orientation of $E$ such that every vertex has out-degree at most $\alpha^*(G)$. Now compute:
  \begin{align*}
    \sum_{e \in E} h(e) \leq \negthickspace \negthickspace  \sum_{\substack{e = (u,v) \in E\\ \text{ $e$ oriented to $v$}}} \negthickspace  d(u) = \sum_{u \in V} \text{out-degree}(u) \times d(u) \leq \sum_{u \in V} \alpha^*(G) \times d(u) = 2 m \alpha^*(G)
    \end{align*}

  For the second result, let $L$ denote the set of edges $e$ with $h(e) \leq s = \bar \delta(G)/x$. Since $\sum_{e} h(e) = m \bar \delta(G)$,  we must have $|E - L| s \leq m \bar \delta(G)$, i.e. $|L| \geq (1-x) |E|$.

  Now consider a vertex set $U \subseteq V$ and let $G' = G(L)[U]$. Each edge $e \in G'$ has an endpoint $u \in U$ with $d(u) \leq s$. Thus, the total number of edges in $G'$ is at most $\sum_{u \in U: d(u) \leq s} d(u) \leq |U| s$. Since this holds for all $U$, the graph $G(L)$ has degeneracy at most $s$.
\end{proof}

\section{Constructions of toroidal graph embedding}
\label{construct}
\begin{lemma}
  \label{construct-lemma}
  For any integers $i \geq 0, t \geq 4$, there is a graph embedding in the torus whose faces consist of two $t$-cycles and $i$ four-cycles, and where each edge is in two distinct faces.
\end{lemma}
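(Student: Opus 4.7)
The plan is to construct the embedding inductively on $i$, starting from a base case at $i=0$ and then applying a simple local move that adds one additional four-cycle face at each step. A quick check with Euler's formula on the torus (using $2E = 2t + 4i$ and $F = i+2$) forces $V = i + t - 2$ and $E = 2i + t$, so every step must contribute exactly one vertex and two edges.

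For the base case $i = 0$ I would use a ``multigraph theta'' on two vertices $u, v$: three parallel edges $e_1, e_2, e_3$ together with a single path $P$ of length $t-3$ from $u$ to $v$ through new vertices $w_1, \dots, w_{t-4}$, whose edges I call $f_1, \dots, f_{t-3}$. This graph has exactly $V = t - 2$ vertices and $E = t$ edges. I would equip it with the rotation system $(e_1, e_2, e_3, f_1)$ at $u$ and $(e_1, e_2, e_3, f_{t-3})$ at $v$, and check by a direct face-tracing computation that the resulting embedding has exactly two faces, both of length $t$: one traverses $e_1, e_2, e_3$ and then the path $P$ backwards, while the other traverses $e_1, e_2, e_3$ and then $P$ forwards. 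Since $V - E + F = (t-2) - t + 2 = 0$, this is genuinely a torus embedding, and each of the $t$ edges lies on two distinct faces.

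For the inductive step, given an embedding with parameters $(t, i)$, I would select one of the two $t$-cycle faces, say $F_1$, whose boundary walk is $v_1, v_2, v_3, \dots, v_t$, introduce a new vertex $w$ in the interior of $F_1$, and join it to $v_1$ and $v_3$ by two fresh edges. The chord $v_1 \, w \, v_3$ splits $F_1$ into a four-cycle face $w \, v_1 \, v_2 \, v_3$ and a $t$-cycle face $w \, v_3 \, v_4 \cdots v_t \, v_1$, while every other face of the embedding is untouched. The modification has $\Delta V = 1$, $\Delta E = 2$, $\Delta F = 1$, which preserves Euler's identity on the torus; and each of the two new edges lies on exactly the two new faces, which are distinct, so every edge remains incident to two distinct faces. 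Iterating this move $i$ times from the base case yields the required embedding for any $i \geq 0$.

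The main obstacle is the base case: the tight counts $V = t-2$, $E = t$, $F = 2$ leave essentially no room for alternative graphs, and verifying that the proposed rotation system indeed produces two length-$t$ faces on the torus requires an explicit dart-by-dart traversal rather than a slick structural argument. A minor technicality worth noting is that for small $t$ the boundary walk of $F_1$ need not be a simple cycle (the base-case faces revisit $u$ and $v$), so the local move can produce parallel edges when $v_1 = v_3$; this is harmless because the hypotheses of Lemma~\ref{cc-lem1} concern only the edge count $s_F \geq 4$ of each face and the fact that every edge borders two distinct faces, both of which remain intact throughout the induction.
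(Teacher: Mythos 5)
Your route is genuinely different from the paper's: the paper builds the embedding in one shot by stacking the two $t$-gons on top of $i$ unit squares, rolling the strip into a cylinder, and gluing the two boundary circles with a twist, whereas you build it inductively from a rotation system. The purely topological content of your argument checks out: the Euler bookkeeping is right, the generalized-theta base case with rotations $(e_1,e_2,e_3,f_1)$ at $u$ and $(e_1,e_2,e_3,f_{t-3})$ at $v$ does trace exactly two length-$t$ faces with every edge on both, and the face-splitting move is a legal, genus-preserving operation that adds one vertex, two edges and one face.

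The gap is in your closing paragraph. It is not true that Lemma~\ref{cc-lem1} needs only ``$s_F \ge 4$ edges per face and every edge on two distinct faces.'' Its proof counts $(k-1)s_F$ edges in $H(F)$ (so each face must have $s_F$ \emph{distinct} vertices), gives each edge of $C(F)$ ``at least $s_F \ge 4$ triangles from vertices of $T(F)$'' (again distinct vertices), and gives each edge of $H(F)$ ``$2$ triangles in $T(F)$,'' which requires every vertex of a face to have two \emph{distinct} neighbours along that face's boundary. Your construction violates this: the base face begins $u,v,u,\dots$, so the first split (and every split when $t=4$) produces a four-cycle face with boundary $w,v_1,v_2,v_1$ --- three distinct vertices and a pair of parallel edges $wv_1$. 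An edge from $w$ to the inserted clique $C(F)$ then lies in only $(k-2)+1=k-1$ triangles, so the graph built by Lemma~\ref{cc-lem1} from your embedding is not a $k$-truss. To repair this you would have to choose the base case and the split vertices so that, whenever the vertex budget $V=t+i-2$ allows, every face vertex keeps two distinct face-neighbours; this is what the paper's explicit twisted-grid construction arranges (its faces are simple cycles up to a single pinch vertex, which still leaves two distinct neighbours everywhere). Note that for $i\le 1$ there are too few vertices for all faces to be simple cycles, so some degeneracy is unavoidable and the lemma must be read as being about closed boundary walks --- but your faces are strictly more degenerate than that reading, and than the downstream application, can tolerate.
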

\begin{proof}
  There are two cases depending on the parity of $t$, as shown in Figure~\ref{fig1}.
  
  \textbf{Case I: $t = 2 s$.} We view each $t$-cycle as a rectangle with side lengths $s-1$ and $1$. They are stacked vertically on top of $i$ unit-length squares. Overall, we have one large rectangle with vertical sides of length $i + 2 (s-1)$ and horizontal sides of length $1$.
  
To get a torus, we first identify the top and bottom edges (marked $X$) to form a cylinder with circular circumference $i + 2 (s-1)$. We next identify the left side of the cylinder with a twisted version of the right side, namely, we rotate one of the sides by $s-1$. Thus, the two edges marked $Y$ are identified.

\textbf{Case II: $t = 2 s + 1$.} We view each $t$-cycle as a trapezoid with base length $1$ and side lengths $s-1$ and $s$, joined to form a rectangle of height $2 s - 1$. We put the $i$ squares below them, and use a similar twisting process to join them into a torus.

\vspace{0.25in}

\begin{figure}[H]
  \begin{minipage}[t]{0.48\textwidth}
      \includegraphics[trim = -4cm -2cm 3.5cm 2cm,scale=0.5,angle = 0]{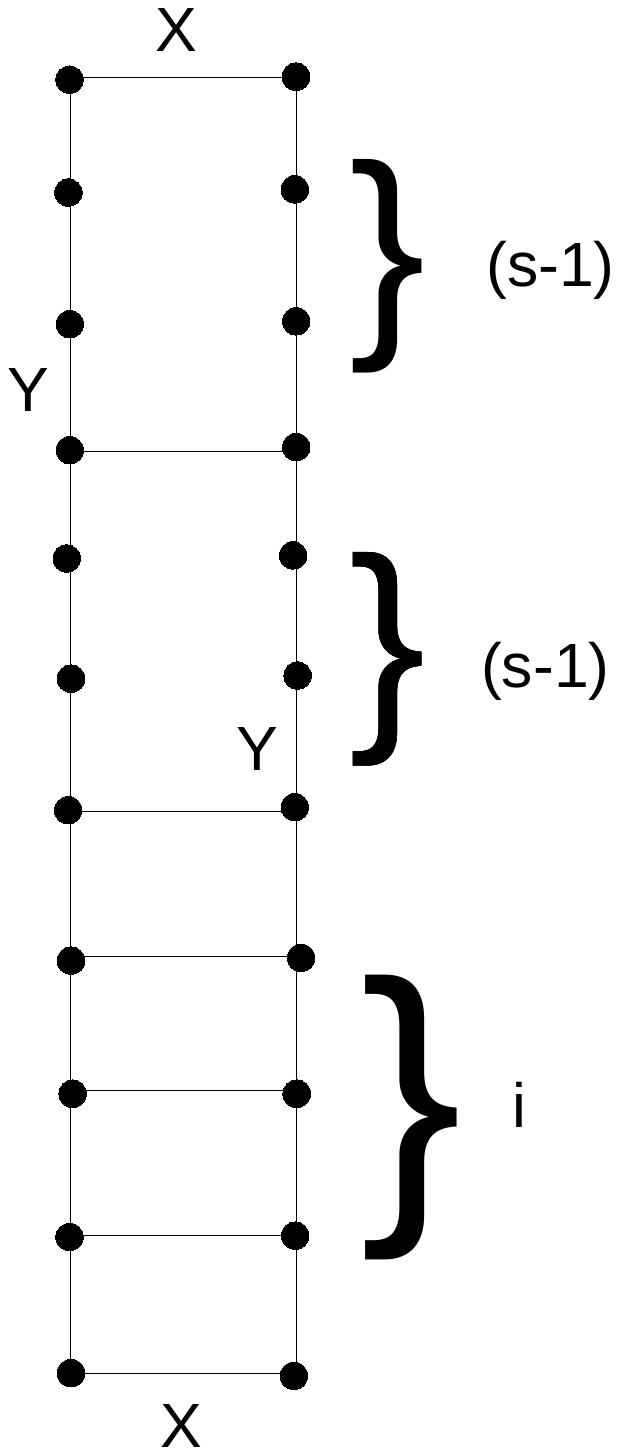}
\end{minipage}
\hspace*{\fill} 
\begin{minipage}[t]{0.48\textwidth}
   \includegraphics[trim = 2cm -1.1cm 3.5cm 4cm,scale=0.5,angle = 0]{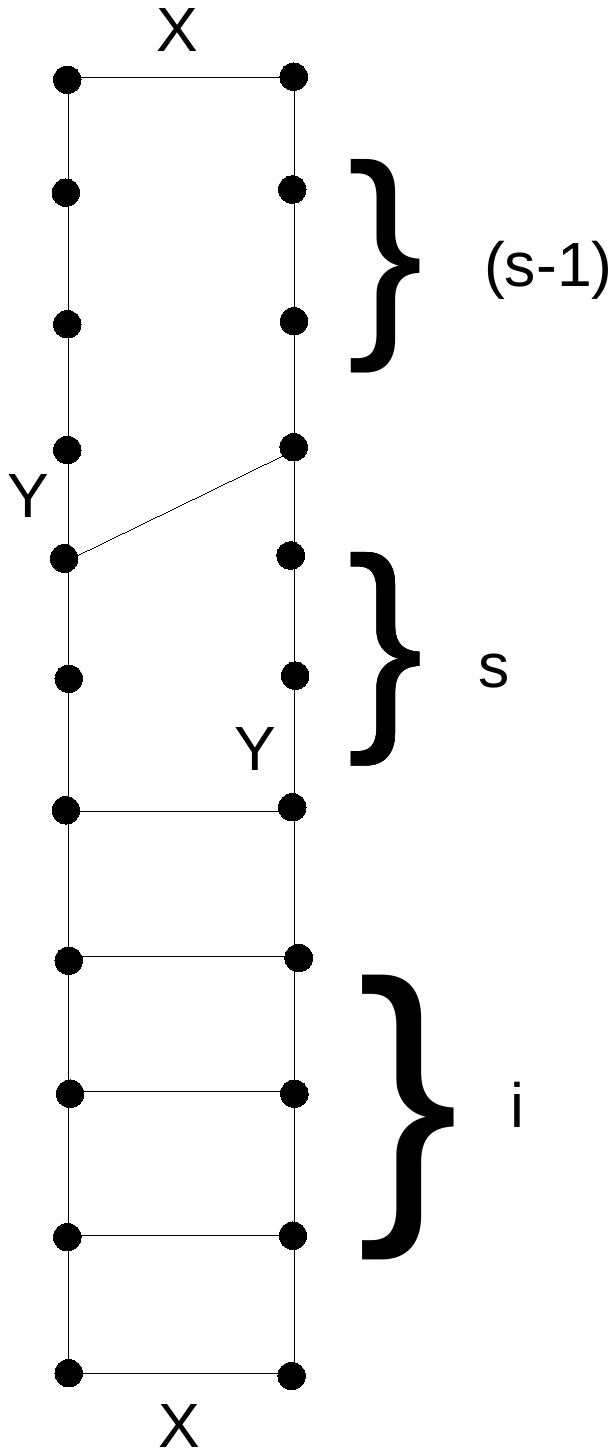}
\end{minipage}
\vspace{-2.7in}
\caption{Two $t$-cycles packed on top of $i$ four-cycles. The case of even values of $t$ is shown on the left side and the case of odd values of $t$ on the right side.}
\label{fig1}
\end{figure}

The horizontal edges clearly have distinct faces. Also, every left edge is identified with a right edge from a distinct face. For example, when $t = 2 s$, the top $s-1$ left edges come from the top $t$-cycle and they are identified with the $s-1$ right edges from the bottom $t$-cycle.
\end{proof}

\bibliographystyle{plain}
\bibliography{ktruss}

\end{document}